
\documentclass[3p,10pt]{elsarticle}
\usepackage{geometry}
\geometry{paperwidth=9in,paperheight=14in,left=1cm,right=1cm,top=1.5cm,bottom=1.5cm}




\usepackage{amssymb,amsmath,amsthm,upgreek,bbm,ulem,setspace}
\usepackage{color}
\newtheorem{theorem}{Theorem}
\newtheorem{lemma}[theorem]{Lemma}
\newtheorem{proposition}[theorem]{Proposition}
\newtheorem{definition}[theorem]{Definition}

\journal{}

\begin{document}

\begin{frontmatter}



\title{Two-barriers-reflected BSDE with Rank-based Data}



\author{Xinwei Feng}
\ead{xwfeng@sdu.edu.cn}
\author{Lu Wang\corref{cor1}}
\ead{202490000103@sdu.edu.cn}

\address{Zhongtai Securities Institute for Financial Studies, Shandong University, Jinan, Shandong 250100, China.}
\cortext[cor1]{Corresponding author}
\begin{abstract}
We investigate two-barriers-reflected backward stochastic differential equations with data from rank-based stochastic differential equation. More specifically, we focus on the solution of backward stochastic differential equations restricted to two prescribed upper-boundary and lower-boundary processes. We rigorously show that this solution gives a probabilistic expression to the viscosity solution of some obstacle problems for the corresponding parabolic partial differential equations. As an application, the pricing problem of an American game option is studied.
\end{abstract}



\begin{keyword}
 Backward stochastic differential equations with two reflecting barriers\sep Rank-based stochastic differential equations\sep Viscosity solution\sep Partial differential equations\sep American game option.


\end{keyword}

\end{frontmatter}


\section{Introduction}

 Bismut \cite{Bismut} first studied the linear backward stochastic differential equation (BSDE) while Pardoux and Peng \cite{peng0}  discussed existence and uniqueness of nonlinear BSDE's solution under some suitable assumptions. El Karoui at al. \cite{peng} first considered BSDE with continuous reflection BSDE (RBSDE) and proved the existence and uniqueness of solution by two different ways (a fixed point argument and approximation via penalization).
Lepeltier and Xu \cite{Lepeltier} studied BSDE reflected on one discontinuous barrier through penalization method. Hu and Ren \cite{HR} established the relative conclusions for a class of RBSDE related to the subdifferential operator of a lower semi-continuous convex function, driven by Teugels martingales associated with a L\'{e}vy process. Luo \cite{Luo} considered a time-delayed BSDE constrained by a given barrier. Chen and Feng \cite{feng1} studied the RBSDE with data based on ranking, in which the terminal value and generator function rely on the solution to rank-based
stochastic differential equation (SDE). For BSDE restricted by two reflecting barriers (two-barriers-reflected BSDE), Cvitanic and Karatzas \cite{Karatzas} probed into the characters of solution and studied its relation with the classical Dynkin game. In Cvitani\'{c} and Karatzas \cite{Karatzas}, under Mokobodzki condition or some regularity conditions on the barrier, the existence of the solution was proved by Picard iteration and the uniqueness was gotten through penalization method.
Fan \cite{Fan} established some
results on $L^1$ solutions of two-barriers-reflected-BSDE under general assumptions and a necessary condition. Li and Shi \cite{LS} studied two-barriers-reflected BSDE  via penalization method.
In Hamad\`{e}ne and Hassani \cite{Hamad¨¨ne}, they gave a more checkable condition, i.e., $L_{t}<U_{t}$, a.s., and established existence and uniqueness under this condition.

It is well known that BSDE serves as a vital important role in many fields, such as partial differential equation (PDE), financial mathematics and so on. Combining BSDE with SDE, it will generalize the classical Feynman-Kac formula and give a probability representation of viscosity solution to a large class of semi-linear PDE. Please see \cite{Pardoux,PardouxPeng1992} for relative applications in PDE and \cite{Asri,peng,Hamad¨¨ne} for the obstacle problem.
As for applications in financial mathematics, El Karoui, Peng and Quenez \cite{EPQ} used BSDE to study the price of a contingent claim and showed that a unique hedging portfolio exists.
Furthermore, one barrier RBSDE can be used to study  American option pricing problem and please refer to \cite{EQ} for more information.
For an American game option, the trader needs to pay a premium and has the power to demand a contingent claim from the broker, when he (or she) makes up his (or her) mind to exercise within a period of time. Therefore, two-barriers-reflected BSDE provides an ideal way to study American game option.
In Ma and Cvitani\'{c} \cite{Ma}, the authors considered the case for a large investor and demonstrated that $Y_{0}$ is the value of option, in which $Y_{t}$ is the value function for the associated Dynkin game.
Kifer \cite{Kifer} paid attention on American game option problem grounded on the classical Black Scholes model. Hamad\`{e}ne \cite{Hamadene-option} studied the mixed zero-sum stochastic differential game problem with help of two-barriers-reflecting BSDE. Essaky and Hassani \cite{Essaky} discussed the relative American game option pricing problem with a new general payoff.
Please refer to \cite{chen1,HLW} and the reference therein for more relevant results on BSDE.

In above mentioned literatures, the stock prices are assumed to be geometric Brownian motion.
However, the assumption of constant volatility is unrealistic in practical applications. Stock prices are affected by many factors and the volatility changes over time.  The volatility and growth rate of stock prices are usually related to the size of capital. In other words, the fluctuations on stock price of large capital assets companies are usually different from those of companies with smaller assets. Moreover, stochastic portfolio theories (see Fernholz   and Karatzas \cite{FK2009}) have shown that the rank-based models are more consistent with the capital distribution curve in the real capital market.
 Therefore, rank-based SDEs, where
the drift and diffusion coefficients of each component are determined by its rank in the
system, are more appropriate models for stock prices and capital distributions in financial market. For rank-based SDEs, \cite{SDE4} established the existence and pathwise uniqueness of strong solution for two-dimensional case. It was extended to finite and countably infinite systems in \cite{HLW}.
 Ichiba \cite{IPS2013} studied the rates of convergence of rank-based interacting diffusions and semimartingale reflecting Brownian motions to equilibrium and worked out various applications to the rank-based abstract equity markets used in stochastic portfolio theory.
 For more information on rank-based SDE,  please refer to \cite{CP2010,SDE5,S2011} for wellposedness and \cite{SDE7,IPBKF,JR2015} for its applications in financial mathematics.

Motivated by these, in subsequent sections, we will concentrate on two-barriers-reflected BSDE linked to rank-based SDE.
In rank-based SDE, the drift and diffusion coefficients for every component are determined by the rank of the components, therefore this is a reasonable model for rank-based prices processes in reality.  Then we apply the theoretical results to study American game option with rank-based stock prices. Moreover, we also show that
the solution to this special two-barriers-reflected BSDE represents the unique viscosity solution to some obstacle problems with the corresponding parabolic PDE.

The structure of this paper is as follows. In Section 2, we firstly present some preliminaries and notations on rank-based SDEs. The main results of two-barriers-reflected BSDE with data based on ranking are introduced in Section 3. In Section 4, we study the existence and uniqueness  of viscosity solution to the related obstacle problem. Finally, we explore the American game options pricing issue in which the stock prices are related to rank-based data in Section 5.

\section{Preliminaries}

In the following, some basic notations and conclusions concerning rank-based SDE  are introduced, which will be used in the subsequent sections. On a classical complete probability space $(\Omega,\mathcal{F},\mathbb{P})$, denote by $\{\mathcal{F}_{t}^{W}\}$ the smallest filtration generated by a standard $n$-dimensional Brownian motion $\{W(t)\}$ such that all $\mathbb{P}$-null subsets of $\mathcal{F}_{\infty}^{W}$ belong to $\mathcal{F}_{0}^{W}$. Fixed a time $T>0$, we shall introduce the following spaces for $p\geq 1$ and $n\in \mathbb{N}$:

\begin{itemize}
\setlength{\itemsep}{0pt}
    \item $M^{p}\left([0,T];\mathbb{R}^{n}\right)$: the set of $\mathbb{R}^{n}$-valued progressively measurable process $\phi$ satisfying
  $\mathbb{E}\left[\int_{0}^{\mathrm{T}}\left|\phi_{t}\right|^{p} d t\right]<\infty$.
    \item $S^{p}\left([0,T] ; \mathbb{R}^{n}\right)$: the set of $\mathbb{R}^{n}$-valued progressively measurable process $\phi$ satisfying $\mathbb{E}\left[\sup\limits_{0 \leq t \leq T}\left|\phi_{t}\right|^{p}\right]<\infty$.
    \item $A_{c}\left([0,T] ; \mathbb{R}^{n}\right)$: the set of $\mathbb{R}^{n}$-valued non-decreasing adapted continuous process $\xi$
        satisfying $\xi_{0}=0$.
\end{itemize}

Besides the above spaces, there are still some function spaces need to be defined. $C\left([0,T] \times \mathbb{R}^{n}\right)$ is referred to be the class of all continuous functions while $C^{1,2}_{b}([0,T) \times \Gamma^{n})$ represents the continuous functions which are continuously differentiable in $t$ and twice continuously differentiable in $x$, with uniformly bounded partial derivatives.
   Let $S^{n}$ represent the $n\times n$ symmetric matrices and $x'$ mean the transpose of a matrix or a vector. The inner product is defined as $a \cdot b$ with $a,b$ and $\langle A,B \rangle$ is the quadratic cross-variation process of two continuous semimartingales $A,B$.  Now we introduce the following notations to study ranked data:

\begin{itemize}
\setlength{\itemsep}{0pt}
\item $\Pi^{n}:=\{x\in\mathbb{R}^{n}:x_{n}<x_{n-1}<\cdots<x_{1}\}$.
\item $F_{k}:=\{x\in\; \partial\Pi^{n}:x_{n}<\cdots<x_{k+1}=x_{k}<\cdots <x_{2}<x_{1}\}$, $k=1,\cdots,n-1$.
\item $\Gamma^{n}:= \left(F_{1}\cup F_{2}\cup \cdots \cup F_{n-1}\right)\cup \Pi^{n}$.
    \item $\partial\Pi^{n}$:\text{  the boundary of }$\Pi^{n}$.
\end{itemize}

Since we will study BSDE with ranked data, some existing results about rank-based SDE where the coefficients of this kind of SDE are associated with the ordered data are listed blow.
The specific form is listed as follows with $\sigma_{k}, \delta_{k} \in \mathbb R$ and $\delta_{k}>0$:
\begin{equation}
\begin{aligned}
\label{2.2.1}
X_{k}^{t,x}(s) =  x_{k} &+ \int_t^{s} \sum_{l=1}^n \sigma_{l} \textbf{1}_{\{X_{k}^{t,x}(u)=X_{(l)}^{t,x}(u)\}}
dW_{k}(u)+ \int_t^{s}\sum_{l=1}^n\delta_{l} \textbf{1}_{\{X_{k}^{t,x}(u)=X_{(l)}^{t,x}(u)\}}
du ,\;k=1,\cdots,n,
\end{aligned}
\end{equation}
where $s\geq t$ and the ranked arrangement of $\{X_1^{t,x}(\cdot),  \cdots, X_n^{t,x}(\cdot)\}$ is referred to
\begin{equation*}
\begin{array}{rll}
X_{(n)}^{t,x}(\cdot)\leq X_{(n-1)}^{t,x}(\cdot)\leq \dots\leq X_{(2)}^{t,x}(\cdot)\leq X_{(1)}^{t,x}(\cdot).
\end{array}
\end{equation*}
For convenience, define
\begin{equation}
\label{rank-sequence}
\widetilde{X}^{t,x}:=\{X_{(1)}^{t,x},X_{(2)}^{t,x},  \cdots, X_{(n)}^{t,x}\},
\end{equation} as ranked sequences.
Moreover, the ranked sequences admit the following formulations (see \cite{Banner}):
\begin{equation*}
\begin{array}{rll}
dX_{(l)}^{t,x}(s)=\delta_{l}ds+\sigma_{l}d\beta_{l}(s)+\frac{1}{2}d\Lambda_{l,l+1}(s)-\frac{1}{2}d\Lambda_{l-1,l}(s),\;s\geq t,\;l=1,\cdots,n,
\end{array}
\end{equation*}
in which
\begin{equation}
\label{2.1}
\beta_{l}(\cdot):=\sum_{k=1}^{n}\int_{0}^{\cdot}\textbf{1}_{\{X_{k}^{t,x}(u)=X_{(l)}^{t,x}(u)\}}dW_{k}(u),\;l=1,\cdots,n,
\end{equation}
and

\[
\Lambda_{0,1}(s)=\Lambda_{n,n+1}(s)=0,\;\Lambda_{l,l+1}(s):=D_{l}(s)-D_{l}(0)-\int_{0}^{s}\textbf{1}_{\{D_{l}(u)>0\}}dD_{l}(u),\;l=1,\cdots,n-1,\;
\]

with
\[
D_{l}(\cdot)=X_{(l)}^{t,x}(\cdot)-X_{(l+1)}^{t,x}(\cdot),\;l=1,\cdots,n-1.
\]

 Furthermore, it follows from \cite{Banner} that  \eqref{2.1} are independent Brownian motions. We notice that the $\Lambda_{l,l+1}(s)$ is the local time accumulated at the origin over $[0,s]$ by the nonnegative semimartingales $D_{l}(\cdot)$,\;$l=1,\cdots,n-1$.
 Given the assumption that the diffusion coefficients $\{\sigma_{1}^{2},\cdots,\sigma_{n}^{2}\}$ satisfies $ \frac{1}{2}(\sigma_{i}^{2}+\sigma_{i+2}^{2}) \leq \sigma_{i+1}^{2} $ for $i=1,\ldots,n-2$, it follows from \cite[Theorem 2]{SDE5} and \cite[Theorem 1.4]{SDE6} that \eqref{2.2.1} has a unique strong solution.
In the end, we set some assumptions on functions $G,g,h$, which will be used throughout the paper. \\[10pt]
\qquad(\textbf{A1)} $G:[0,T] \times \Gamma^{n}\times \mathbb{R}\times \mathbb{R}^{n}\rightarrow \mathbb{R}$ is a continuous function and there is a positive constant $c$ such that for $(t,x)$ in $[0,T] \times \Gamma^{n}$, $y_{1},y_{2} $ in $ \mathbb{R}$ and $z_{1},z_{2}$ in $ \mathbb{R}^{n}$,
{\begin{equation}
\begin{matrix}
|G(t,x,y_{1},z_{1})-G(t,x,y_{2},z_{2})|\leq c(|y_{1}-y_{2}|+|z_{1}-z_{2}|), \vspace{1ex}\\
|G(t,x,0,0)|\leq c(|x|+1).
\end{matrix}
\end{equation}
(\textbf{A2)} For $g: \Gamma^{n} \rightarrow\mathbb R$ and $h:[0,T] \times \Gamma^{n}\rightarrow \mathbb{R}$, there is a positive constant $c$ such that for any $t\in[0,T]$, $x_{1},x_{2}$ in $\Gamma^{n}$,
\begin{equation}
|g(x_{1})-g(x_{2})|\leq c|x_{1}-x_{2}|,
\end{equation}
\begin{equation}
h(t, x) \leq c\left(1+|x|^{q}\right).
\end{equation}}
\section{Two-barriers-reflected BSDE with Rank-based Data}

From now on, let us consider two-barriers-reflected BSDE combining with ranked SDE $\eqref{2.2.1}$. Besides the basic assumptions $\textbf{(A1)$-$(A2)}$, we need to add a barrier assumption $\textbf{(A3)}$ on functions $L$ and $U$.
\\[10pt]
 (\textbf{A3)}  $L:[0,T]\times \Gamma^{n}\rightarrow\mathbb R $, $U:[0,T]\times \Gamma^{n}\rightarrow\mathbb R$ are continuous functions and there is a positive constant $c$ for $(t,x)$ in $[0,T]\times \Gamma^{n}$ such that
 \begin{equation}\begin{matrix}
L(t,x)<U(t,x),\vspace{1ex}\\
L(T,x)\leq g(x)\leq U(T,x),\vspace{1ex}\\
|L(t,x)|+|U(t,x)|\leq c(1+|x|).
\end{matrix}\end{equation}\\
\indent For simplicity, let $L(\cdot):=L(\cdot,\widetilde{X}^{t,x}(\cdot))$ and $U(\cdot):=U(\cdot,\widetilde{X}^{t,x}(\cdot))$ in the subsequent content with $\widetilde{X}^{t,x}(\cdot)$ defined in \eqref{rank-sequence}.
Denote by $K^{t,x,+}(\cdot):=(K^{t,x}(\cdot))^{+}$ and $K^{t,x,-}(\cdot):=(K^{t,x}(\cdot))^{-}$ two increasing continuous processes with $K^{t,x,+}(0)=K^{t,x,-}(0)=0$.
For each initial point $(t,x)$ in $[0,T] \times \Gamma^{n}$, considering this two-barriers-reflected BSDE with the notations in \textit{Section 2}:
\begin{equation}
\label{3.1}
\begin{aligned}
Y^{t,x}(s)&=g(\widetilde{X}^{t,x}(T))+\int_{s}^{T}G(u,\widetilde{X}^{t,x}(u),Y^{t,x}(u),\hat{Z}^{t,x}(u))du-\int_{s}^{T}Z^{t,x}(u)\cdot dW(u) \\
&+K^{t,x,+}(T)-K^{t,x,+}(s)-(K^{t,x,-}(T)-K^{t,x,-}(s)),\ t\leq s\leq T.
\end{aligned}
\end{equation}
where
\[
\hat{Z}_{k}^{t,x}(\cdot):=\sum_{l=1}^{n}Z_{l}^{t,x}(\cdot)\textbf{1}_{\{X_{l}^{t,x}(\cdot)=\widetilde{X}_{(k)}^{t,x}(\cdot)\}},\;k=1,\cdots,n.
\]
Similar to \cite[Section 2.3]{feng2}, the stochastic integral part in (\ref{3.1}) is equivalent with another form:
\begin{eqnarray}
\label{3.2}
\begin{aligned}
\int_{s}^{T}Z^{t,x}(u)\cdot dW(u)=\sum_{k=1}^{n}\int_{s}^{T}\hat{Z}_{k}^{t,x}(u)d\beta_{k}(u)=\int_{s}^{T}\hat{Z}^{t,x}(u)
\cdot d\beta(u).
\end{aligned}
\end{eqnarray}
where $\beta(\cdot)$ is the vector of $\{\beta_{l}(\cdot),\;l=1,\cdots,N\}$ defined in \eqref{2.1}.
Therefore, we have the following results concerning the wellposedness of \eqref{3.1}.

\begin{theorem}\label{th1} With assumptions (\textbf{A1})-(\textbf{A3}), there is a unique quadruplet $(Y^{t,x}(\cdot),Z^{t,x}(\cdot),K^{t,x,+}(\cdot),K^{t,x,-}(\cdot))$ satisfies the properties:
\begin{enumerate}[(i)]
\setlength{\parsep}{0ex} 
\setlength{\itemsep}{0ex}
\item $Y^{t,x}(\cdot)\in S^{2}([t,T];\mathbb{R})$ and $Z^{t,x}(\cdot)\in M^{2}([t,T];\mathbb{R}^{n})$;
\item $(Y^{t,x}(\cdot),Z^{t,x}(\cdot),K^{t,x,+}(\cdot),K^{t,x,-}(\cdot))$ satisfies $\eqref{3.1}$;
\item $L(r)\leq Y^{t,x}(r) \leq U(r)$,\;$r \in [t,T]$;
\item $\int_{t}^{T}\big[Y^{t,x}(r)-L(r)\big]dK^{t,x,+}(r)=\int_{t}^{T}\big[ U(r)-Y^{t,x}(r)\big]dK^{t,x,-}(r)=0$.
\end{enumerate}
\end{theorem}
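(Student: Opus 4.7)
My plan is to reduce equation \eqref{3.1} to a standard two-barriers-reflected BSDE driven by the Brownian motion $\beta$ defined in \eqref{2.1}, and then invoke the existence and uniqueness result of Hamad\`ene and Hassani, whose key hypothesis $L_{t}<U_{t}$ a.s.\ is exactly the strict inequality imposed in \textbf{(A3)}. Using the identity \eqref{3.2}, I would rewrite \eqref{3.1} so that the unknown pair becomes $(\hat{Z}^{t,x},K^{t,x,\pm})$ driven by $\beta$; the process $Z^{t,x}$ is then recovered from $\hat{Z}^{t,x}$ by the random rank-to-label permutation, which is an $\mathbb{R}^{n}$-isometry (permutation matrices are orthogonal) and therefore preserves the $M^{2}([t,T];\mathbb{R}^{n})$ norm. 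Note also that the $\{X_{k}=X_{(l)}\}$ indicators determining this permutation are $\mathcal{F}^{W}$-measurable and realize a genuine permutation away from a null set of tie times, so martingale representation with respect to $\beta$ holds within $\mathcal{F}^{W}$, which legitimizes the reformulation.

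With this rewriting, I would verify the standard hypotheses of the classical two-barriers-reflected BSDE theory on the data $\bigl(g(\widetilde{X}^{t,x}(T)),\,G(\cdot,\widetilde{X}^{t,x}(\cdot),\cdot,\cdot),\,L(\cdot,\widetilde{X}^{t,x}(\cdot)),\,U(\cdot,\widetilde{X}^{t,x}(\cdot))\bigr)$. Uniform Lipschitz regularity in $(y,z)$ is immediate from \textbf{(A1)}. The terminal value, the process $G(\cdot,\widetilde{X}^{t,x}(\cdot),0,0)$, and the two barrier processes all lie in the appropriate $L^{2}$ or $S^{2}$ space thanks to the linear growth conditions in \textbf{(A1)}--\textbf{(A3)} combined with the $L^{p}$ moment estimate $\mathbb{E}\bigl[\sup_{t\le s\le T}|\widetilde{X}^{t,x}(s)|^{p}\bigr]\le C_{p}(1+|x|^{p})$. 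The ordering $L(\cdot,\widetilde{X}^{t,x}(\cdot))<U(\cdot,\widetilde{X}^{t,x}(\cdot))$ a.s., together with the terminal admissibility $L(T,\widetilde{X}^{t,x}(T))\le g(\widetilde{X}^{t,x}(T))\le U(T,\widetilde{X}^{t,x}(T))$, follows from \textbf{(A3)} because $\widetilde{X}^{t,x}(u)\in\Gamma^{n}$ for every $u$. Invoking the Hamad\`ene--Hassani theorem then produces a unique quadruplet $(Y^{t,x},\hat{Z}^{t,x},K^{t,x,+},K^{t,x,-})$ solving the $\beta$-driven version of \eqref{3.1} and satisfying properties (i)--(iv) with the Skorokhod-type minimality at both barriers.

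To conclude, I would undo the permutation to obtain $Z^{t,x}\in M^{2}([t,T];\mathbb{R}^{n})$ and translate the four properties back to the $W$-formulation in \eqref{3.1}; uniqueness in \eqref{3.1} then follows from uniqueness in the $\beta$-driven formulation by the same isometry argument. The only step that is not a direct appeal to existing theory is the $L^{p}$ bound on $\widetilde{X}^{t,x}$, since the coefficients in \eqref{2.2.1} are discontinuous in $x$. Because reordering coordinates preserves the Euclidean norm, it suffices to bound $\mathbb{E}\bigl[\sup_{t\le s\le T}|X^{t,x}(s)|^{p}\bigr]$, and I would obtain this by applying It\^o's formula to $|X^{t,x}(s)|^{p}$ followed by Gronwall's inequality, exploiting that for each component $k$ the indicators $\mathbf{1}_{\{X_{k}^{t,x}=X_{(l)}^{t,x}\}}$ sum to one in $l$, so the effective drift and diffusion coefficients of each particle are bounded by $\max_{l}(|\delta_{l}|+\sigma_{l}^{2})$ almost surely. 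This moment estimate is the main, though ultimately technical, obstacle; every other ingredient reduces to a direct invocation of existing two-barriers-reflected BSDE theory.
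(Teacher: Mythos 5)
Your proposal is correct and follows essentially the same route as the paper: the paper's proof also uses the identity \eqref{3.2} to rewrite \eqref{3.1} as the $\beta$-driven equation \eqref{3.3} and then cites the Hamad\`ene--Hassani existence and uniqueness result, transferring the solution back through the equivalence of the two formulations. The additional verifications you spell out (the moment estimate for the rank-based SDE, the strict barrier ordering from \textbf{(A3)}, and the rank-to-label permutation isometry recovering $Z^{t,x}$ from $\hat{Z}^{t,x}$) are exactly the details the paper leaves implicit, so your argument is a fleshed-out version of the same proof.
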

\begin{proof}
Owing to \eqref{3.2}, \eqref{3.1} can be rewritten as
\begin{equation}
\label{3.3}
\begin{aligned}
Y^{t,x}(s)&=g(\widetilde{X}^{t,x}(T))+\int_{s}^{T}G(u,\widetilde{X}^{t,x}(u),Y^{t,x}(u),\hat{Z}^{t,x}(u))du
-\int_{s}^{T}\hat{Z}^{t,x}(u)
\cdot d\beta(u) \\
&+K^{t,x,+}(T)-K^{t,x,+}(s)-(K^{t,x,-}(T)-K^{t,x,-}(s)),\ t\leq s\leq T.
\end{aligned}
\end{equation}
The existence and uniqueness of (\ref{3.3}) can be obtained  from \cite{Hamad¨¨ne}. Due to the equivalent relation between (\ref{3.1}) and (\ref{3.3}), we can obtain that $(Y^{t,x}(\cdot),Z^{t,x}(\cdot),K^{t,x,+}(\cdot),K^{t,x,-}(\cdot))$ is a unique solution of (\ref{3.1}) with two barriers $L(r)$ and $U(r)$.
\end{proof}
The unique quadruplet $(Y^{t,x}(\cdot),Z^{t,x}(\cdot),K^{t,x,+}(\cdot),K^{t,x,-}(\cdot))$ in \textit{Theorem \ref{th1}} is called the solution of \eqref{3.1}.
For a point $(t,x)$ in $[0,T]\times \Gamma^{n}$, let us give a definition of function $u(t,x)$ as
\begin{equation}\label{u}
u(t,x):=Y^{t,x}(t).
\end{equation}
In the next proposition, we will present the continuity of function $u(\cdot,\cdot)$.

\begin{proposition} With assumptions (\textbf{A1})-(\textbf{A3}), we have $u(\cdot,\cdot)\in C([0,T] \times \Gamma^{n})$.
\end{proposition}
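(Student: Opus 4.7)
The plan is to combine $L^{2}$-continuity of the ranked SDE flow $(t,x)\mapsto\widetilde{X}^{t,x}$ with the standard $L^{2}$ stability estimate for two-barriers-reflected BSDE. Fix $(t_{0},x_{0})\in[0,T]\times\Gamma^{n}$ and take a sequence $(t_{k},x_{k})\to(t_{0},x_{0})$; without loss of generality I would assume $t_{k}\ge t_{0}$, the other case being symmetric, and I would extend each $Y^{t,x}$ to $[0,T]$ by declaring it constant on $[0,t]$ so that all processes live on a common interval.

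First I would establish
\begin{equation*}
\mathbb{E}\Big[\sup_{s\in[t_{k},T]}\big|\widetilde{X}^{t_{k},x_{k}}(s)-\widetilde{X}^{t_{0},x_{0}}(s)\big|^{2}\Big]\longrightarrow 0.
\end{equation*}
The key point is that, although the unranked components $X_{k}^{t,x}$ solve an SDE with indicator coefficients, the ranked process $\widetilde{X}^{t,x}$ satisfies an SDE with constant drift $\delta_{l}$ and diffusion $\sigma_{l}$ driven by the independent Brownian motions $\beta_{l}$ of \eqref{2.1}, corrected by the pairwise local times $\Lambda_{l,l+1}$ at the collision sets $\{D_{l}=0\}$. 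Combined with the fact that $x\mapsto(x_{(1)},\dots,x_{(n)})$ is $1$-Lipschitz, a Gronwall argument on $|\widetilde{X}^{t_{k},x_{k}}-\widetilde{X}^{t_{0},x_{0}}|^{2}$ (after expanding by It\^o's formula and exploiting the sign of the local-time terms) yields the convergence; this is essentially the continuity statement already used for the one-barrier setting in \cite{feng1}.

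Second, I would apply the standard a priori estimate for two-barriers-reflected BSDE (cf.\ \cite{Hamad¨¨ne,Karatzas}) to the two solutions $(Y^{t_{k},x_{k}},\hat Z^{t_{k},x_{k}},K^{t_{k},x_{k},\pm})$ and $(Y^{t_{0},x_{0}},\hat Z^{t_{0},x_{0}},K^{t_{0},x_{0},\pm})$ of \eqref{3.3}. Because (\textbf{A1})–(\textbf{A3}) make $G$ Lipschitz in $(y,z)$, $g$ Lipschitz in $x$, and $L,U$ continuous in $x$ with $L<U$, that estimate together with Step 1 gives
\begin{equation*}
\mathbb{E}\Big[\sup_{s\in[t_{k},T]}\big|Y^{t_{k},x_{k}}(s)-Y^{t_{0},x_{0}}(s)\big|^{2}\Big]\longrightarrow 0.
\end{equation*}
To conclude $u(t_{k},x_{k})=Y^{t_{k},x_{k}}(t_{k})\to Y^{t_{0},x_{0}}(t_{0})=u(t_{0},x_{0})$, I would split
\begin{equation*}
\big|Y^{t_{k},x_{k}}(t_{k})-Y^{t_{0},x_{0}}(t_{0})\big|\le\big|Y^{t_{k},x_{k}}(t_{k})-Y^{t_{0},x_{0}}(t_{k})\big|+\big|Y^{t_{0},x_{0}}(t_{k})-Y^{t_{0},x_{0}}(t_{0})\big|,
\end{equation*}
controlling the first piece by the preceding display and the second by continuity of the sample paths of $Y^{t_{0},x_{0}}$ at $t_{0}$; the uniform $S^{2}$ bound from Theorem~\ref{th1} upgrades the a.s.\ convergence along a subsequence to convergence in mean, hence in probability, and passing to subsequences delivers the claimed continuity.

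The main obstacle is the first step. Because the drift and diffusion in \eqref{2.2.1} involve rank-selecting indicators, the classical Lipschitz-in-initial-data theory fails at the level of the unranked components $X_{k}^{t,x}$, so one is forced to work directly with $\widetilde{X}^{t,x}$ and absorb the nonsmooth local-time terms $\Lambda_{l,l+1}$ into the energy estimate. A secondary delicate point is the simultaneous time perturbation: $\widetilde{X}^{t_{k},x_{k}}$ and $\widetilde{X}^{t_{0},x_{0}}$ start from different times, so one has to compare $\widetilde{X}^{t_{0},x_{0}}$ at the shifted initial condition $\widetilde{X}^{t_{0},x_{0}}(t_{k})$ to $x_{k}$ and bound the increment over the vanishing interval $[t_{0},t_{k}]$ uniformly in $k$.
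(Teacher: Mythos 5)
Your route (flow continuity in $L^{2}$ plus a quantitative stability estimate for the two-barrier reflected BSDE) is genuinely different from the paper's, and as sketched it has two real gaps. First, the Gronwall argument in your Step 1 does not go through in the form you describe: the ranked process $\widetilde{X}^{t,x}$ is driven by the Brownian motions $\beta_{l}$ of \eqref{2.1}, and these are themselves built from rank-indicators of the solution, so $\widetilde{X}^{t_{k},x_{k}}$ and $\widetilde{X}^{t_{0},x_{0}}$ are driven by \emph{different} noises; you cannot subtract the two ranked SDEs and apply It\^{o}--Gronwall as if they shared a common driver, and the unranked system \eqref{2.2.1} has discontinuous coefficients, so no classical continuity-in-initial-data theory applies there either. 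Making Step 1 rigorous would require, e.g., recasting the whole forward--backward system on a single driving $\beta$ and invoking Lipschitz continuity of the associated Skorokhod map for the gap process (a Harrison--Reiman type argument), which is a substantial piece of work that your sketch leaves out. Second, the ``standard a priori estimate'' you invoke in Step 2 is not available off the shelf in the setting of the paper: under (\textbf{A3}) one only has the strict separation $L<U$ (the Hamad\`ene--Hassani framework), not Mokobodzki's condition, and the known barrier-perturbation estimates for reflected BSDEs control $\|Y^{1}-Y^{2}\|_{S^{2}}^{2}$ by terms of the form $\big(\mathbb{E}\sup_{s}|L^{1}(s)-L^{2}(s)|^{2}\big)^{1/2}\big(\mathbb{E}[K^{\pm}(T)^{2}]\big)^{1/2}$, so you would additionally need uniform $L^{2}$ bounds on $K^{\pm}(T)$, which are not guaranteed here; moreover $L,U$ are only continuous in $x$, so even converting Step 1 into $S^{2}$ convergence of the barriers needs a separate uniform-integrability argument.

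The paper avoids both difficulties by a soft argument: it penalizes the upper barrier in \eqref{penalty-1} (and symmetrically the lower barrier), uses the already-established continuity of the value function for the one-barrier reflected BSDE with rank-based data (Theorem 3.1 of the cited work of Chen and Feng) to get continuity of each $\uline{u}^{n}$ and $\overline{u}^{n}$, and then uses the comparison theorem to obtain monotone convergence of these continuous functions to $u$ from above and from below, which yields upper and lower semicontinuity simultaneously and hence continuity, with no quantitative flow or barrier stability estimates needed. If you want to salvage your approach you must supply the two missing ingredients identified above; otherwise the penalization route is the one that actually closes the argument under (\textbf{A1})--(\textbf{A3}).
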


\begin{proof}
We will take the method of penalization technique to study the continuity of $u(t,x)$. To start with, consider this BSDE associated with one-lower barrier for $s$ varying in $[t,T]$:
\begin{eqnarray}\label{penalty-1}
\begin{aligned}
\left\{\begin{array}{l}
\uline Y^{t,x,n}(s)=g(\widetilde{X}^{t,x}(T))+\displaystyle \int_{s}^{T}\Big[G(u,\widetilde{X}^{t,x}(u),\uline Y^{t,x,n}(u),\uline {\hat{Z}}^{t,x,n}(u))-n(U(u)-\uline Y^{t,x,n}(u))^{-}\Big]du \\
\qquad \qquad +(K^{t,x,n,+}(T)-K^{t,x,n,+}(s))-\displaystyle\int_{s}^{T} \uline Z^{t,x,n}(u)\cdot dW(u), \\
\uline Y^{t,x,n}(s)\geq L(s),\\
\displaystyle \int_{t}^{T}\big[\uline Y^{t,x,n}(u)-L(u)\big]dK^{t,x,n,+}(u)=0.
\end{array}\right.
\end{aligned}
\end{eqnarray}

It has been shown in \cite[Theorem 3.1]{feng2} that for any $n\in\mathbb N$, \eqref{penalty-1} admits a unique triple $(\uline{Y}^{t,x,n},\uline{Z}^{t,x,n},K^{t,x,n,+})$ and the deterministic function $\uline{u}^{n}(t,x)$ defined by
\[
\uline{u}^{n}(t,x):=\uline{Y}^{t,x,n}(t),
\]
is continuous.
Owing to the relevant comparison conclusion of BSDE with a lower barrier (see \cite[Corollary 1.4]{Hamad¨¨ne}), the solution sequence $\{\uline{Y}^{t,x,n}\}_{n\geq 0}$ is decreasing and converges to $Y^{t,x}$. From these facts, the sequence $\{\uline{u}^{n}(t,x)\}_{n\geq 0}$ decreasingly converges to the limit $u(t,x)$. This means that $u(t,x)$ is lower semi-continuous.

Next consider the following one-upper reflecting barrier BSDE for $t \leq s \leq T$:
\begin{eqnarray}\label{penalty-2}
\begin{aligned}
\left\{\begin{array}{l}
\overline Y^{t,x,n}(s)=g(\widetilde{X}^{t,x}(T))+\displaystyle\int _{s}^{T}\Big[G(u,\widetilde{X}^{t,x}(u),\overline Y^{t,x,n}(u),\hat{\overline Z}^{t,x,n}(u))+n(L(u)-\overline Y^{t,x,n}(u))^{+}\Big]du\\
\qquad \qquad -(K^{t,x,n,-}(T)-K^{t,x,n,-}(s))-\displaystyle\int_{s}^{T}\overline Z^{t,x,n}(u)\cdot dW(u), \\
Y^{t,x,n}(s)\leq U(s), \\
\displaystyle\int_{t}^{T}\big[U(u)-\overline Y^{t,x,n}(u)\big]dK^{t,x,n,-}(u)=0.
\end{array}\right.
\end{aligned}
\end{eqnarray}

It should be noted that $(-Y,-Z,K)$ is corresponding solution of a lower-reflecting barrier BSDE connected with $(-G(t,x,-y,-z),-g,-U)$ iff $(Y,Z,K)$ is a solution of a upper-reflecting barrier BSDE connected with $(G,g,U)$. Thus, the above penalization scheme can be transferred to a lower-reflecting barrier BSDE combined with $(-G(t,x,-y,-z)-n(L-y)^{+},-g,-U)$. The notation $\overline{Y}^{t,x,n}$ is defined as a solution of \eqref{penalty-2} and the sequence $\big\{\overline{Y}^{t,x,n}\big\}_{n\geq 0}$ is increasing and converges to the limit $Y^{t,x}$.
Similarly, we conclude that
 $\big\{\overline{u}^{n}(t,x):=\overline{Y}^{t,x,n}(t)\big\}_{n\geq 0}$ increasingly converges to limit $u(t,x)$. Therefore, $u$ is upper semi-continuous and the proof is complete.
\end{proof}

\section{PDEs Related to the Two-barriers-reflected BSDE}

In this part, we shall investigate the viscosity solution of the following kind of PDE:
\begin{eqnarray}
\label{4}
\left\{\begin{array}{l}
\min\left\{u(t,x)-L(t,x),\max\left\{u(t,x)-U(t,x), -\frac{\partial u}{\partial t}(t,x)-
\mathcal{L}u(t,x)-G(t,x,u(t,x),\sigma \triangledown u(t,x))\right\}\right\} =0,\\
\qquad \qquad  \qquad \qquad\qquad\qquad\qquad\quad \qquad \qquad\qquad (t,x)\in [0,T)\times \Pi ^{n},
 \\
  u(T,x)=g(x),\qquad \qquad  \;\qquad\qquad \qquad \qquad\qquad  \,x\in \Gamma^{n},
 \\
 \frac{\partial u}{\partial x_{k+1}}(t,x)=\frac{\partial u}{\partial x_{k}}(t,x),\qquad  \qquad  \qquad   \qquad  \quad \;\;\;  \quad  (t,x)\in [0,T)\times F_{k},\;k=1,\cdots,n-1,
\end{array}\right.
\end{eqnarray}
where
\begin{equation}
\label{sigma}
\sigma=\begin{pmatrix} \sigma_{1}& 0 & \cdots &0\\
0 & \sigma_{2} &\cdots & 0\\
\vdots & \vdots & \vdots & \vdots\\
0& 0& \cdots & \sigma_{n}
\end{pmatrix},
\end{equation}
and
\[
\mathcal{L}=\frac{1}{2}\sum_{i=1}^{n}\left(\sigma_{i}^{2}\frac{\partial^{2}}{\partial x_{i}^{2}}+2\delta_{i}\frac{\partial}{\partial x_{i}}\right).
\]

To begin with, we state two types of definition for viscosity solution, which will be used in the \textit{Section 4.1} and \textit{Section 4.2}.   

\begin{definition}

A function $v(\cdot,\cdot) \in C([0,T]\times \Gamma^{n})$ is called a viscosity subsolution (respectively, supersolution) to ($\ref{4}$) on condition that it satisfies for any $ x\in \Gamma^{n}$
\[
g(x)\geq v(T,x)\;\;(respectively,\;g(x) \leq v(T,x)),
\]
and for any  $\varphi(\cdot,\cdot) \in  C^{1,2}_{b}([0,T] \times \Gamma^{n})$, whenever $(t,x)\in[0,T)\times \Gamma^{n}$ is a local maximum (respectively, minimum) of $v(\cdot,\cdot) - \varphi(\cdot,\cdot)$, we have
\begin{eqnarray*}
\left\{\begin{array}{l}
\min\Big\{v(t,x)-L(t,x), \max\Big\{v(t,x)-U(t,x),\\
\qquad \qquad \qquad\quad\;\qquad-\frac{\partial \varphi}{\partial t}(t,x)-
\mathcal{L}\varphi(t,x)-G(t,x,v(t,x),\sigma \triangledown \varphi(t,x))  \Big\}\Big\} \leq 0,\;x \in \Pi^{n},\\
\min\left\{\frac{\partial \varphi}{\partial x_{k+1}}(t,x)-\frac{\partial \varphi}{\partial x_{k}}(t,x), \min\Big\{v(t,x)-L(t,x),\max\Big\{v(t,x)-U(t,x),\right.\\
\left.\left.\left.
\qquad \qquad  \qquad \quad  -\frac{\partial \varphi}{\partial t}(t,x)-\mathcal{L}\varphi(t,x)-G(t,x,v(t,x),\sigma \triangledown \varphi(t,x))  \right\}\right\} \right\} \leq 0,\; x \in F_{k},\;k=1,\cdots,n.
\end{array}\right.
\end{eqnarray*}
(respectively,
\begin{eqnarray*}
\left\{\begin{array}{l}
\min\Big\{v(t,x)-L(t,x), \max\Big\{v(t,x)-U(t,x),\\
\qquad \qquad \qquad\quad\;\qquad-\frac{\partial \varphi}{\partial t}(t,x)-
\mathcal{L}\varphi(t,x)-G(t,x,v(t,x),\sigma \triangledown \varphi(t,x))  \Big\}\Big\}\geq 0,\; x \in \Pi^{n},\\
\max\Big\{\frac{\partial \varphi}{\partial x_{k+1}}(t,x)-\frac{\partial \varphi}{\partial x_{k}}(t,x), \min\Big\{v(t,x)-L(t,x), \max\Big\{v(t,x)-U(t,x),\\
\qquad \qquad \qquad\quad\;\qquad-\frac{\partial \varphi}{\partial t}(t,x)-
\mathcal{L}\varphi(t,x)-G(t,x,v(t,x),\sigma \triangledown \varphi(t,x))  \Big\}\Big\}\Big\} \geq 0,\;x \in F_{k},\;k=1,\cdots,n.)
\end{array}\right.
\end{eqnarray*}

In all, if a function $v(\cdot,\cdot)\in C([0,T]\times \Gamma^{n})$ is both a viscosity supersolution and subsolution, it is called a viscosity solution to ($\ref{4}$).
\end{definition}
For convenience, for $v(\cdot,\cdot)\in C\left([0,T] \times \mathbb{R}^{n}\right)$, we set $\bar{\mathcal K}_{v}^{2,+}(t, x)$ as the parabolic superjet of $v(\cdot,\cdot)$ at $(t,x)$ and $\bar{\mathcal K}_{v}^{2,-}(t, x)$ as the parabolic subjet of $v(\cdot,\cdot)$ at $(t,x)$. More detailed, that is, $\bar{\mathcal K}_{v}^{2,+}(t, x)$ is the set of triples $(P,Q,M)$ defined as follows:
\begin{eqnarray}
\begin{array}{l}
\bar{\mathcal K}_{v}^{2,+}(t,x):=\Big\{(P, Q, M) \in \mathbb{R} \times \mathbb{R}^{n}\times S^{n}|\ \forall(r, z)\in[0, T] \times \mathbb{R}^{n},\\ \qquad \qquad \; v(r, z)\leq v(t, x)+Q \cdot(z-x)+P(r-t)+\frac{1}{2}(z-x)' \cdot M(z-x)+o\left(|z-x|^{2}+|r-t|\right)\Big\}.
\end{array}
\end{eqnarray}
Homoplastically, $\bar{\mathcal K}_{v}^{2,-}(t, x)$ is the set satisfying:
\begin{eqnarray}
\begin{array}{l}
\bar{\mathcal K}_{v}^{2,-}(t,x):=\Big\{(P,Q,M) \in \mathbb{R} \times \mathbb{R}^{n}\times S^{n}|\ \forall(r, z)\in[0, T] \times \mathbb{R}^{n}, \\
 \qquad \qquad \;  v(r, z) \geq v(t, x)+Q \cdot(z-x)+P(r-t)+\frac{1}{2}(z-x)' \cdot M(z-x)+o\left(|z-x|^{2}+|r-t|\right)\Big\}.
\end{array}
\end{eqnarray}

By the concept of superjet and subjet, we give another definition of viscosity solution originated from \cite{Crandall}. Please refer to \cite[Lemma 5.4, Chapter 4]{Yong_book} for the detailed proof of the equivalence. For simplicity, let $\delta:=(\delta_{1},...,\delta_{n})$ with $\delta_{1},...,\delta_{n}$ in \eqref{2.2.1} and $\sigma$ be the matrix in \eqref{sigma}.
\begin{definition}
Funcition $v(\cdot,\cdot)\in C([0,T]\times \Gamma^{n})$ is called a viscosity subsolution (respectively, supersolution) of ($\ref{4}$) on condition that  it satisfies for any $ x\in \Gamma^{n}$
\[
g(x)\geq v(T,x)\;(respectively,\;g(x) \leq v(T,x) ),
\]
and for any $(P,q,M)\in \bar{\mathcal K}_{v}^{2,+}(t,x)$, we have
\begin{eqnarray*}
	\left\{\begin{array}{l}
		\min\Big\{v(t,x)-L(t,x),\max\Big\{ v(t,x)-U(t,x),\\
\qquad\qquad \qquad \qquad\qquad \qquad-P-\delta\cdot q-\frac{1}{2}tr(\sigma^{2}M)-G(t,x,v(t,x),\sigma q)  \Big\}\Big\} \leq 0,\;x \in \Pi^{n}, \\
		\min\Big\{q_{k+1}-q_{k},\min\Big\{ v(t,x)-L(t,x),\max\Big\{v(t,x)-U(t,x), \\
\qquad\qquad \qquad \qquad\qquad \qquad-P-\delta\cdot q-\frac{1}{2}tr(\sigma^{2}M)-G(t,x,v(t,x),\sigma q) \Big\}\Big\}\Big\} \leq 0,\;x \in F_{k},\;k=1,\cdots,n.\\
	\end{array}\right.
\end{eqnarray*}
(respectively, for any $(P,q,M)\in \bar{\mathcal K}_{v}^{2,-}(t,x)$, we have
\begin{eqnarray*}
	\left\{\begin{array}{l}
		\min\Big\{v(t,x)-L(t,x),\max\Big\{ v(t,x)-U(t,x),\\
\qquad\qquad \qquad \qquad\qquad \qquad-P-\delta\cdot q-\frac{1}{2}tr(\sigma^{2}M)-G(t,x,v(t,x),\sigma q)  \Big\}\Big\} \geq 0,\;x \in\Pi^{n}, \\
		\max\Big\{q_{k+1}-q_{k}, \min\Big\{v(t,x)-L(t,x),\max\Big\{ v(t,x)-U(t,x),\\
\qquad\qquad \qquad \qquad\qquad \qquad-P-\delta\cdot q-\frac{1}{2}tr(\sigma^{2}M)-G(t,x,v(t,x),\sigma q)  \Big\}\Big\}\Big\}\geq 0,\;x \in F_{k},\;k=1,\cdots,n.)\\
	\end{array}\right.
\end{eqnarray*}

In all, if a function $v(\cdot,\cdot)\in C([0,T]\times \Gamma^{n})$ is both a viscosity subsolution and a supersolution, it is called a viscosity solution to ($\ref{4}$).
\end{definition}

Noting that $\min\{a, \max\{b,c\}\}=\max\{b,  \min\{a, c\}\}$ holds for $a\geq b$ with $a,b,c \in \mathbb{R}$, therefore,
\begin{equation}
\label{4.0.0.0}
\begin{aligned}
&\min\Big\{v(t,x)-L(t,x),\max\Big\{ v(t,x)-U(t,x),-P-\delta\cdot q-\frac{1}{2}tr(\sigma^{2}M)-G(t,x,v(t,x),\sigma q)  \Big\}\Big\}\\
=&\max\Big\{v(t,x)-U(t,x), \min\Big\{v(t,x)-L(t,x),-P-\delta\cdot q-\frac{1}{2}tr(\sigma^{2}M)-G(t,x,v(t,x),\sigma q) \Big\}\Big\}.
\end{aligned}
\end{equation}

In the subsequent part, we would demonstrate that the solution of two-barriers-reflected BSDE studied in the previous section (i.e., \eqref{u}) presents a probabilistic expression to the viscosity solution of \eqref{4}.

\hspace*{1.em}
\subsection{Existence of Viscosity Solution}
\hspace*{0.5em}
\begin{theorem}
  Assume that (\textbf{A1)}-(\textbf{A3)} are satisfied, $u(\cdot,\cdot)$ defined by ($\ref{u}$) is a viscosity solution of ($\ref{4}$).
\end{theorem}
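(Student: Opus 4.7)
The plan is to combine the standard BSDE--PDE correspondence argument with an Itô expansion tailored to the ranked process $\widetilde{X}^{t,x}$. First I would verify the terminal condition $u(T,x)=g(x)$ directly from \eqref{3.1}, and then establish the Markov flow identity
\[
Y^{t,x}(s)=u(s,\widetilde{X}^{t,x}(s)),\quad s\in[t,T],
\]
by applying the uniqueness part of Theorem \ref{th1} to the equation shifted to $[s,T]$ with initial state $\widetilde{X}^{t,x}(s)$. This identity will be the bridge between the stochastic object $Y^{t,x}$ and the deterministic function $u$ throughout.

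I would then prove that $u$ is a viscosity subsolution (the supersolution half being symmetric after swapping the roles of $U$, $K^{t,x,-}$ and $L$, $K^{t,x,+}$). Fix $(t,x)\in[0,T)\times\Gamma^n$ and a test function $\varphi\in C^{1,2}_b$ such that $(u-\varphi)(t,x)=0$ is a local maximum. The bound $u(t,x)\geq L(t,x)$ is immediate from item $(iii)$ of Theorem \ref{th1}. If $u(t,x)=U(t,x)$, the required $\max$--inequality is trivially satisfied, so I may assume $u(t,x)<U(t,x)$. By continuity of $u$, $U$ and the sample paths of $\widetilde{X}^{t,x}$, there exists a stopping time $\tau>t$ such that $Y^{t,x}(s)<U(s)$ on $[t,\tau]$, and the Skorohod condition $(iv)$ then forces $K^{t,x,-}$ to be constant on $[t,\tau]$. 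Using the flow identity, $Y^{t,x}(s)\leq \varphi(s,\widetilde{X}^{t,x}(s))$ near $(t,x)$ with equality at $s=t$; applying Itô's formula to $\varphi(s,\widetilde{X}^{t,x}(s))$ via the semimartingale decomposition recalled in Section 2 produces $\partial_t\varphi+\mathcal{L}\varphi$, a martingale $\sigma\nabla\varphi\cdot d\beta$, and local--time contributions of the form $\tfrac12\sum_l(\partial_{x_l}\varphi-\partial_{x_{l+1}}\varphi)\,d\Lambda_{l,l+1}$. Comparing this expansion with the BSDE dynamics \eqref{3.3} on $[t,\tau]$, taking conditional expectation to kill the martingale part, dividing by $\tau-t$ and letting $\tau\downarrow t$ then yields the desired differential inequality.

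The two cases in the viscosity definition arise naturally from this comparison. When $x\in\Pi^n$, the distance from $x$ to $\partial\Pi^n$ is strictly positive, so every $\Lambda_{l,l+1}$ is constant on $[t,\tau]$ for $\tau$ close enough to $t$, the local--time sum vanishes and one recovers
\[
-\partial_t\varphi(t,x)-\mathcal{L}\varphi(t,x)-G(t,x,u(t,x),\sigma\nabla\varphi(t,x))\leq 0.
\]
When $x\in F_k$, only $\Lambda_{k,k+1}$ can grow instantly. If $\partial_{x_{k+1}}\varphi(t,x)-\partial_{x_k}\varphi(t,x)\leq 0$ at $(t,x)$, the outer $\min$ in the subsolution definition is already non--positive; otherwise the associated local--time increment contributes with a sign that can be dropped from the inequality, again returning the PDE operator bound. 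This dichotomy is precisely the content of the $\min\{q_{k+1}-q_k,\,\cdot\}$ clause in the definition and mirrors the analogous step in \cite{feng2} for the one--barrier case.

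The main obstacle I anticipate is the non--smooth behavior of $\widetilde{X}^{t,x}$ across the faces $F_k$: Itô's formula has to be applied to $\varphi$ restricted to $\Gamma^n$, the local--time terms must be tracked with the correct signs, and one must argue that the instants at which $\widetilde{X}^{t,x}$ lies on $F_k$ contribute to $d\Lambda_{k,k+1}$ with a definite sign. A secondary delicate point is the coupling between the two reflecting barriers: for the subsolution one localizes away from $U$ to discard $K^{t,x,-}$, for the supersolution one localizes away from $L$ to discard $K^{t,x,+}$, and the passage from \eqref{4.0.0.0} is used to rewrite the operator in the form most convenient for each half. Once these pieces are in place, the viscosity inequalities at both interior and boundary points follow, and the continuity of $u$ established in the preceding proposition completes the verification that $u$ is a viscosity solution of \eqref{4}.
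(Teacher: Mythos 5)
Your overall route is the paper's (the flow identity $Y^{t,x}(s)=u(s,\widetilde X^{t,x}(s))$, It\^o expansion of $\varphi(\cdot,\widetilde X^{t,x}(\cdot))$ with the local-time terms, and the interior/face dichotomy), but your case reduction for the subsolution is reversed, and the step as written fails. At a local maximum of $u-\varphi$ the subsolution inequality $\min\{u-L,\max\{u-U,-\partial_t\varphi-\mathcal L\varphi-G\}\}\le 0$ is trivial only when $u(t,x)=L(t,x)$; if $u(t,x)>L(t,x)$ you must prove the operator inequality even in the case $u(t,x)=U(t,x)$, since $u\le U$ always holds and the inner maximum is then $\max\{0,\cdot\}$. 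So ``if $u=U$ the max-inequality is trivially satisfied, hence assume $u<U$'' is not a valid reduction for this half. Worse, localizing away from $U$ discards $K^{t,x,-}$ but keeps $K^{t,x,+}$, and in the inequality obtained from $u(\tau,\cdot)-\varphi(\tau,\cdot)\le 0$ the surviving term $-(K^{t,x,+}(\tau)-K^{t,x,+}(t))\le 0$ has the unfavorable sign: after taking expectations you only get that the integrated operator is bounded above by $\widetilde{\mathbb E}\big[K^{t,x,+}(\tau)-K^{t,x,+}(t)\big]$, which can be strictly positive because nothing prevents $Y^{t,x}$ from hitting $L$ on $[t,\tau]$; no contradiction or sign conclusion follows. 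The paper does the opposite: it assumes $u>L$ with a margin $\varepsilon$ on a neighborhood, so the Skorohod condition kills $dK^{t,x,+}$ there (via the auxiliary process $\gamma^t_a=u(a,\widetilde X^{t,x}(a))-\varepsilon$, which lies strictly between the barriers), while $K^{t,x,-}$ enters with a harmless sign, and this yields $\varepsilon\widetilde{\mathbb E}(\tau-t)\le 0$. Your scheme (assume $u<U$, discard $K^-$) is the one needed for the supersolution half, not the subsolution.

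A second gap is the limit step: ``taking conditional expectation, dividing by $\tau-t$ and letting $\tau\downarrow t$'' does not by itself produce $G(t,x,u(t,x),\sigma\nabla\varphi(t,x))$, because in \eqref{3.3} the generator is evaluated at $\hat Z^{t,x}(a)$, which is only square-integrable and need not be close to $\sigma\nabla\varphi(a,\widetilde X^{t,x}(a))$ in any pointwise sense near $t$. The paper removes this dependence by writing the difference of generators as $h(a)\cdot\big[\nabla\varphi(a,\widetilde X^{t,x}(a))-\hat Z^{t,x}(a)\big]$ with $h$ bounded, absorbing it through a Girsanov change of measure, and then arguing by contradiction on a neighborhood where $\min\{u-L,\,M(\cdot\,;\varphi,u)\}\ge\varepsilon$, rather than by a genuine $\tau\downarrow t$ limit; without this device (or an alternative such as penalization plus stability of viscosity solutions) your comparison step is incomplete. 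Your handling of the face case $F_k$ (either the derivative clause is nonpositive, or the $\Lambda_{k,k+1}$ integrand has a sign allowing it to be dropped, with a stopping time avoiding the other faces) does agree with the paper once the two points above are repaired.
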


\begin{proof}
We will only prove that $u(\cdot,\cdot)$ is a viscosity subsolution since the proof of supersolution is quite similar owing to \eqref{4.0.0.0}.
At the beginning, observe the fact that
$Y^{t,x}(s)=Y^{s,\widetilde{X}^{t,x}(s)}(s)=u(s,\widetilde{X}^{t,x}(s))$, $t\leq s\leq T$. 
 Let $\varphi \in C^{1,2}_{b}([0,T] \times \Gamma^{n})$  and $(t,x)\in[0,T)\times \Gamma^{n}$ is a local maximum of $u - \varphi$. Without loss of generality, assume that $\varphi(t,x)=u(t,x)$.

\textbf{Case 1:} $ x \in \Pi ^{n}$.\quad From the definition of $u$, we have $L(t,x)\leq u(t,x)\leq U(t,x)$. If $u(t,x)=L(t,x)$, obviously,
 $$\min\Big\{u(t,x)-L(t,x), \max\Big\{u(t,x)-U(t,x),-\frac{\partial \varphi}{\partial t}(t,x)-
\mathcal{L}\varphi(t,x)-G(t,x,u(t,x),\sigma \triangledown \varphi(t,x))  \Big\}\Big\} \leq 0.$$ Thus  we just need to consider the case $u(t,x)>L(t,x)$. Under these assumptions, in order to get
 $$\min\Big\{u(t,x)-L(t,x), \max\Big\{u(t,x)-U(t,x),-\frac{\partial \varphi}{\partial t}(t,x)-
\mathcal{L}\varphi(t,x)-G(t,x,u(t,x),\sigma \triangledown \varphi(t,x))  \Big\}\Big\} \leq 0,$$
it is
sufficient to show the following inequality holds:
\begin{eqnarray}
\label{4.1.0}
M(t,x;\varphi,u):= -\frac{\partial \varphi}{\partial t}(t,x)-
\mathcal{L}\varphi(t,x)-G(t,x,u(t,x),\sigma \triangledown \varphi(t,x)) \leq 0.
\end{eqnarray}
We will prove it by contradiction. Suppose \eqref{4.1.0} is not true. Naturally, there is a positive constant $\varepsilon$ so that
 \[
 \mu(t,x) := \min\Big\{\Big(u(t,x)-L(t,x)\Big), M(t,x;\varphi,u)\Big\} \geq 2\varepsilon.
 \]
Let $0<\alpha \leq T-t$ be such that  $\mu(r,y)\geq \varepsilon $ for every $t\leq r \leq t+\alpha $, $y\in E:=\{y:|y-x|\leq \alpha\}\subset\Pi^{n}$. Define a stopping time $\tau$ by
\[
\tau:=\min\left\{\inf\Big\{r>t:|\widetilde{X}^{t,x}(r)-x|\geq \alpha\Big\}, (t+\alpha)\right\}.
\]
 Let $K(\cdot):=K^{t,x,+}(\cdot)-K^{t,x,-}(\cdot)$, then
\begin{eqnarray}
\label{4.1.1}
\begin{aligned}
u(t,x)&=Y^{t,x}(\tau)+\int_{t}^{\tau}G(a,\widetilde{X}^{t,x}(a),Y^{t,x}(a),\hat{Z}^{t,x}(a))da+K(\tau)-K(t)-\int_{t}^{\tau}\hat{Z}^{t,x}(a)
\cdot d\beta(a) \\
&=u(\tau,\widetilde{X}^{t,x}(\tau))+\int_{t}^{\tau}G(a,\widetilde{X}^{t,x}(a),Y^{t,x}(a),\hat{Z}^{t,x}(a))da+K(\tau)-K(t)-\int_{t}^{\tau}\hat{Z}^{t,x}(a)
\cdot d\beta(a).
\end{aligned}
\end{eqnarray}
Applying It\^{o}'s formula to $\varphi(r,\widetilde{X}^{t,x}(r))$, we have
\begin{equation*}
\begin{aligned}
d\varphi(r,\widetilde{X}^{t,x}(r)) &= \frac{\partial \varphi}{\partial
r}dr+\sum\limits_{k=1}^{n}\frac{\partial \varphi}{\partial
x_{k}}dX_{(k)}^{t,x}(r)+\frac{1}{2}\sum\limits_{k=1}^{n}\sum\limits_{j=1}^{n}\frac{\partial \varphi}{\partial
x_{k}}\frac{\partial \varphi}{\partial
x_{j}}d\langle X_{(k)}^{t,x},X_{(j)}^{t,x}\rangle_{r} \\
&=\frac{\partial \varphi}{\partial
r}dr+\sum\limits_{k=1}^{n}\delta_{k}\frac{\partial \varphi}{\partial
x_{k}}dr
+\frac{1}{2}\sum\limits_{k=1}^n\sigma_{k}^{2}\frac{\partial^{2}
\varphi}{\partial x_{k}^{2}}dr
+\sum\limits_{k=1}^{n}\sigma_{k}\frac{\partial \varphi}{\partial
x_{k}}d\beta_{k}(r)\\
&-\frac{1}{2}\sum\limits_{k=1}^{n}\frac{\partial
\varphi}{\partial
x_{k}}d\Lambda_{k-1,k}(r)+\frac{1}{2}\sum\limits_{k=1}^{n}\frac{\partial
\varphi}{\partial x_{k}}d\Lambda_{k,k+1}(r).
\end{aligned}
\end{equation*}
Integrating from time t to $\tau$,
\begin{equation}
\begin{aligned}
\label{4.1.2}
\varphi(\tau,\widetilde{X}^{t,x}(\tau))&=\int_{t}^{\tau}\left[\frac{\partial
\varphi}{\partial
a}(a,\widetilde{X}^{t,x}(a))+\mathcal{L}\varphi(a,\widetilde{X}^{t,x}(a))\right]da+\sum\limits_{k=1}^{n}\int_{t}^{\tau}\sigma_{k}\frac{\partial
\varphi}{\partial x_{k}}(a,\widetilde{X}^{t,x}(a))d\beta_{k}(a)+\varphi(t,x).
\end{aligned}
\end{equation}

By the fact $u(t,x)=\varphi(t,x)$, subtracting $(\ref{4.1.2})$ from $(\ref{4.1.1})$, we get
\begin{equation*}
\begin{aligned}
u(\tau,\widetilde{X}^{t,x}(\tau)) -
\varphi(\tau,\widetilde{X}^{t,x}(\tau))
&=\int_{t}^{\tau}\left[-\frac{\partial \varphi}{\partial
a}(a,\widetilde{X}^{t,x}(a))-\mathcal{L}\varphi(a,\widetilde{X}^{t,x}(a))-G(a,\widetilde{X}^{t,x}(a),Y^{t,x}(a),\hat{Z}^{t,x}(a))\right]da\\
&-\left(K(\tau)-K(t)\right)+\int_{t}^{\tau}\Big[\hat{Z}^{t,x}(a)-\sigma \triangledown
\varphi(a,\widetilde{X}^{t,x}(a))\Big]\cdot d\beta(a).
\end{aligned}
\end{equation*}
Moreover, notice that $u(\cdot,\cdot)\leq \varphi(\cdot,\cdot)$, we have
\begin{equation*}
\begin{aligned}
&\int_{t}^{\tau}\left[-\frac{\partial \varphi}{\partial
a}(a,\widetilde{X}^{t,x}(a))-\mathcal{L}\varphi(a,\widetilde{X}^{t,x}(a))-G(a,\widetilde{X}^{t,x}(a),Y^{t,x}(a),\hat{Z}^{t,x}(a))\right]da\\
&-\left(K(\tau)-K(t)\right)+\int_{t}^{\tau}\Big[\hat{Z}^{t,x}(a)-\sigma \triangledown
\varphi(a,\widetilde{X}^{t,x}(a))\Big]\cdot d\beta(a) \leq 0.
\end{aligned}
\end{equation*}
For $t\leq a\leq \tau$, define $\hat M$ and $b$,
\begin{equation*}
\begin{aligned}
&\hat M(a):= -\frac{\partial \varphi}{\partial
a}(a,\widetilde{X}^{t,x}(a))-\mathcal{L}\varphi(a,\widetilde{X}^{t,x}(a))-G(a,\widetilde{X}^{t,x}(a),u(a,\widetilde{X}^{t,x}(a)),\sigma\triangledown
\varphi(a,\widetilde{X}^{t,x}(a))),\\
&b(a):=G(a,\widetilde{X}^{t,x}(a),u(a,\widetilde{X}^{t,x}(a)),\sigma\triangledown
\varphi(a,\widetilde{X}^{t,x}(a)))-G(a,\widetilde{X}^{t,x}(a),u(a,\widetilde{X}^{t,x}(a)),\hat{Z}^{t,x}(a)).
\end{aligned}
\end{equation*}
Since $b(a)\leq C|\triangledown \varphi(a,\widetilde{X}^{t,x}(a))-\hat{Z}^{t,x}(a)|$
 for some positive constant $C$, there is an adapted bounded process
$h(a)$ such that
\[
b(a)=h(a)\cdot \Big[\triangledown \varphi(a,\widetilde{X}^{t,x}(a))-\hat{Z}^{t,x}(a)\Big].
\]
As a consequence,
\begin{equation}
\label{4.1.3}
\begin{aligned}
u(\tau,\widetilde{X}^{t,x}(\tau)) -
\varphi(\tau,\widetilde{X}^{t,x}(\tau))&=\int_{t}^{\tau}\left[\hat M(a)-h(a)\cdot
(\hat{Z}^{t,x}(a)-\sigma\triangledown \varphi(a,\widetilde{X}^{t,x}(a))\right]da\\
&-(K(\tau)-K(t))+\int_{t}^{\tau}\Big[\hat{Z}^{t,x}(a)-\sigma \triangledown
\varphi(a,\widetilde{X}^{t,x}(a))\Big]\cdot d\beta(a)\leq0.
\end{aligned}
\end{equation}
Let $\{N^{t}(r),t\leq r\leq T\}$ be the unique solution of,
\begin{eqnarray*}
\begin{array}{l}
dN^{t}(r)=h(r)N^{t}(r) \cdot d\beta(r),\;N^{t}(t)=1,\\
\end{array}
\end{eqnarray*}
namely, for $r$ varying in $[t,T]$,
\[
N^{t}(r)=\exp\left\{-\frac{1}{2}\int_{t}^{r}|h(a)|^{2}da+\int_{t}^{r}h(a)\cdot d\beta(a)\right\},
\]
which is a $\mathbb P$-martingale. Based upon the Girsanov's Theorem, there is a new probability measure $\mathbb Q$  satisfying
$d\mathbb Q=N^{t}(T)d\mathbb{P}$ and $$\widetilde{\beta}^{t}(r)=\beta(r)-\beta(t)-\int_{t}^{r}h(a)da,$$ is a
$\mathbb Q$-Brownian motion.
And then,
\begin{equation*}
\begin{aligned}
&-\int_{t}^{\tau}\left[h(a)\cdot (\hat{Z}^{t,x}(a))-\sigma\triangledown
\varphi(a,\widetilde{X}^{t,x}(a))\right]da+\int_{t}^{\tau}\left[\hat{Z}^{t,x}(a)-\sigma\triangledown
\varphi(a,\widetilde{X}^{t,x}(a))\right]\cdot d\beta(a)\\
&=\int_{t}^{\tau}\left[\hat{Z}^{t,x}(a)-\sigma\triangledown
\varphi(a,\widetilde{X}^{t,x}(a))\right]\cdot d\widetilde{\beta}(a).
\end{aligned}
\end{equation*}
Therefore,
\begin{eqnarray}
\label{new-1}
\int_{t}^{\tau}\hat M(a)da
-(K(\tau)-K(t))+\int_{t}^{\tau}\Big[\hat{Z}^{t,x}(a)-\sigma \triangledown
\varphi(a,\widetilde{X}^{t,x}(a))\Big]\cdot d\widetilde\beta(a)\leq0.
\end{eqnarray}

Next, we verify the above integral is  a $\mathbb Q$-martingale on interval $[t,T]$. Owing to the property that $\triangledown \varphi$ is uniformly bounded,
\begin{equation*}
\begin{aligned}
&\widetilde{\mathbb E}\left\{\int_{t}^T\left|\hat{Z}^{t,x}(a)-\sigma\triangledown
\varphi(a,\widetilde{X}^{t,x}(a))\right|^{2}da\right\}^{\frac{1}{2}}\vspace{1ex}\\
&\leq C\left\{\widetilde{\mathbb E}\int_{t}^{T}\left(|\hat{Z}^{t,x}(a)|^{2}+1\right)da\right\}^{\frac{1}{2}}\\
&\leq C\Bigg\{\mathbb E\left[|N^{t}(T)|^{2}\right]\Bigg\}^{\frac{1}{2}} \left\{
\mathbb E\int_{t}^{T}\left[|\hat{Z}^{t,x}(a)|^{2}+1\right]da\right\}^{\frac{1}{2}}<\infty. \vspace{1ex}
\end{aligned}
\end{equation*}
Hence, taking expectation $\widetilde{\mathbb E}$ with respect to $\mathbb Q$ of $(\ref{new-1})$,
\begin{eqnarray}
\label{4.1.4}
\begin{aligned}
&\widetilde{\mathbb E}\int_{t}^{\tau}\Big[-\frac{\partial \varphi}{\partial
a}(a,\widetilde{X}^{t,x}(a))-\mathcal{L}\varphi(a,\widetilde{X}^{t,x}(a))-G(a,\widetilde{X}^{t,x},u(a,\widetilde{X}^{t,x}(a)),\sigma\triangledown
\varphi(a,\widetilde{X}^{t,x}(a)))\Big]da\\
-&\widetilde{\mathbb E}\Big[K(\tau)-K(t)\Big]\leq 0.
\end{aligned}
\end{eqnarray}
From the definition of $\tau$, it is naturally to see that the first integral equation is no less than $\varepsilon\widetilde{\mathbb E}(\tau-t)$.

Now turn to the the last term on the left side of \eqref{4.1.4}.
Define a process
$\gamma_{a}^{t}:=[u(a,\widetilde{X}^{t,x}(a))-\varepsilon]\textbf{1}_{[t,\tau)}(a)$, which is a
RCLL process. It also satisfies
\[
L(a,\widetilde{X}^{t,x}(a))\leq \gamma_{a}^{t}=u(a,\widetilde{X}^{t,x}(a))-\varepsilon<u(a,\widetilde{X}^{t,x}(a))\leq U(a,\widetilde{X}^{t,x}(a)),\;a\in[t,\tau).
\]
Hence, by the definition of process $K$, we obtain
\[
\varepsilon(K(\tau)-K(t))=\int_{t}^{\tau}\Big[u(a,\widetilde{X}^{t,x}(a))-\gamma_{a}^{t}\Big]dK=\int_{t}^{\tau}\Big[Y^{t,x}(a)-\gamma_{a}^{t}\Big]dK =-\int_{t}^{\tau}\Big[Y^{t,x}(a)-\gamma_{a}^{t}\Big]dK^-\leq 0,
\]
which shows the last part on the left side of \eqref{4.1.4} is non-negative. As a consequence,
\begin{equation}
\label{maodun}
\varepsilon\widetilde{\mathbb E}(\tau-t) \leq 0.
\end{equation}
This is contradictory to the definition of $\tau$.

\textbf{Case 2:} $ x \in F_{k}$ for $k=1,2,\cdots,n-1$. Similarly, assume that $u(t,x)>L(t,x)$. It then suffices to show that either $\{(\frac{\partial u}{\partial x_{k+1}}-\frac{\partial u}{\partial x_{k}})(t,x) \leq 0\}$ or $\{ M(t,x;u,\varphi) \leq 0\}$.
We still apply the method of contradiction to prove this. Suppose $0<\alpha \leq T-t$ and there is a constant $\varepsilon > 0$ such that
\[
\inf\limits_{|z-x|\leq \alpha,t\leq r \leq t+\alpha} \bigg( \frac{\partial u}{ x_{k+1}}-\frac{\partial u}{x_{k}}\bigg)(r,z)\geq \varepsilon,
\]
and
\[
\inf\limits_{t\leq r \leq t+\alpha,|z-x|\leq \alpha} \mu(r,z) \geq \varepsilon.
\]
Denote a stopping time by $\hat\tau$
\[
\hat\tau:=\min\left\{\inf\Big\{r>t: \alpha \leq |\widetilde{X}^{t,x}(r)-x|\Big\}, \inf\Big\{r>t:\widetilde{X}^{t,x}(r)\in F_{j},j\neq k\Big\}, \Big(t+\alpha\Big)\right\}.
\]
Applying the same technique in \textbf{Case 1},
\begin{equation}
\begin{aligned}
u(\hat\tau,\widetilde{X}^{t,x}(\hat\tau))-\varphi(\hat\tau,\widetilde{X}^{t,x}(\hat\tau))&=\int_{t}^{\hat\tau}[\hat M(a)-h(a)\cdot
(\hat{Z}^{t,x}(a))-\sigma\triangledown \varphi(a,\widetilde{X}^{t,x}(a))]da-(K(\hat \tau)-K(t))\\
&+\int_{t}^{\hat\tau}(\hat{Z}^{t,x}(a)-\sigma \triangledown
\varphi(a,\widetilde{X}^{t,x}(a)))\cdot d\beta(a)\\
&+\frac{1}{2}\int_{t}^{\hat\tau}\bigg(\frac{\partial \varphi}{\partial x_{k+1}}(a,\widetilde{X}^{t,x}(a))-\frac{\partial \varphi}{\partial x_{k}}(a,\widetilde{X}^{t,x}(a))\bigg)d\Lambda_{k,k+1}(a).
\end{aligned}
\end{equation}
Taking expectation on both sides and applying similar techniques as in \textbf{Case 1} to  the first three terms on the right,
\[
\varepsilon\widetilde{\mathbb E}(\hat\tau-t)+\widetilde{\mathbb E}\left\{\frac{1}{2}\int_{t}^{\hat\tau}\bigg(\frac{\partial \varphi}{\partial x_{k+1}}(a,\widetilde{X}^{t,x}(a))-\frac{\partial \varphi}{\partial x_{k}}(a,\widetilde{X}^{t,x}(a))\bigg)d\Lambda_{k,k+1}(a)\right\} \leq 0.
\]
Moreover, by
\[
\widetilde{\mathbb E}\left\{\int_{t}^{\hat\tau}\bigg(\frac{\partial \varphi}{\partial x_{k+1}}(a,\widetilde{X}^{t,x}(a))-\frac{\partial \varphi}{\partial x_{k}}(a,\widetilde{X}^{t,x}(a))\bigg)d\Lambda_{k,k+1}(a)\right\} \geq \widetilde{\mathbb E} \left\{\varepsilon \int_{t}^{\hat\tau}d\Lambda_{k,k+1}(a)\right\} \ge 0,
\]
we have $\varepsilon\widetilde{\mathbb E}(\hat\tau-t)\leq 0$, which is a contradiction.
\end{proof}

\hspace*{1em}

\subsection{The Uniqueness of Viscosity Solution}
\hspace*{0.8em}

\begin{theorem}\label{th9}
Assume that (\textbf{A1)}-(\textbf{A3)} are satisfied. If for each positive $R$, there exists a positive function $\zeta_R$ on $[0,\infty)$ with
$\lim_{r\rightarrow 0^+} \zeta_R (r) =0$
such that
\begin{equation}\label{27}
\big|G(t,x_{1},y,z)-G(t,x_{2},y,z)\big|\leq\zeta_R\left((1+|z|)|x_{1}-x_{2}|\right),
\end{equation}
for all $t\in [0,T]$,\;$x_{1},x_{2}\in \Gamma^{n}$,\;$y\in \mathbb{R}$,\;$z\in \mathbb{R}^n$ satisfying $max\big\{|x_{1}|,|x_{2}|,|y|\big\} \leq R$.
Then \eqref{4} admits at most one viscosity solution such that
\begin{equation}\label{increasing-condition}
\lim_{|x|\rightarrow\infty}|u(t,x)|e^{-A\log^2|x|}=0,\text{ uniformly in }t\in[0,T],
\end{equation}
for some $A>0$.
\end{theorem}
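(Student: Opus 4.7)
The plan is to establish a comparison principle: any viscosity subsolution $u_1$ and any viscosity supersolution $u_2$ of \eqref{4} both satisfying the growth bound \eqref{increasing-condition} must obey $u_1\leq u_2$ on $[0,T]\times\Gamma^n$. Uniqueness follows by exchanging the roles of the two viscosity solutions. I will work with the jet-based Definition using $\bar{\mathcal K}^{2,\pm}$, together with the rewriting \eqref{4.0.0.0}, since this is best suited to a Crandall--Ishii argument. The standard penalization-by-weight machinery for unbounded domains must be combined with the face-boundary conditions on $F_k$ and the two-obstacle alternatives.

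Fix $\gamma,\theta>0$ and choose $A'>A$. Introduce the weight $\phi(x):=\exp(A'\log^2(|x|+e))$, which I will take to depend on $x$ only through $|x|$ so that automatically $\partial_{x_{k+1}}\phi=\partial_{x_k}\phi$ on every face $F_k$. The growth hypothesis \eqref{increasing-condition} guarantees that $u_1(t,x)-u_2(t,x)-\theta\phi(x)\to-\infty$ as $|x|\to\infty$, uniformly in $t$. Define on $[0,T]\times\Gamma^n\times\Gamma^n$ the doubled functional
\begin{equation*}
\Psi_{\epsilon}(t,x,y):=u_1(t,x)-u_2(t,y)-\frac{|x-y|^2}{\epsilon}-\theta\bigl[\phi(x)+\phi(y)\bigr]-\frac{\gamma}{T-t},
\end{equation*}
and argue by contradiction assuming $\sup(u_1-u_2-2\theta\phi-\gamma/(T-\cdot))>0$ for some small $\theta,\gamma$. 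A maximizer $(t_\epsilon,x_\epsilon,y_\epsilon)$ exists, satisfies $t_\epsilon<T$ for $\epsilon$ small (by the terminal inequality together with the strict penalty $\gamma/(T-t)$), and the classical estimates yield $|x_\epsilon-y_\epsilon|^2/\epsilon\to 0$ with $x_\epsilon,y_\epsilon$ accumulating at a common $\hat{x}\in\Gamma^n$. The Crandall--Ishii lemma then furnishes jets $(P_1,q_1,M_1)\in\bar{\mathcal K}_{u_1}^{2,+}(t_\epsilon,x_\epsilon)$ and $(P_2,q_2,M_2)\in\bar{\mathcal K}_{u_2}^{2,-}(t_\epsilon,y_\epsilon)$ with $P_1-P_2=\gamma/(T-t_\epsilon)^2$, $q_1=2(x_\epsilon-y_\epsilon)/\epsilon+\theta\nabla\phi(x_\epsilon)$, $q_2=2(x_\epsilon-y_\epsilon)/\epsilon-\theta\nabla\phi(y_\epsilon)$, and the standard matrix inequality yielding $\mathrm{tr}(\sigma^2(M_1-M_2))=O(\theta)+o_\epsilon(1)$.

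At the maximizing point, strict positivity of $\Psi_\epsilon$ and the barrier assumption (\textbf{A3}) force $u_1(t_\epsilon,x_\epsilon)>L(t_\epsilon,x_\epsilon)$ and $u_2(t_\epsilon,y_\epsilon)<U(t_\epsilon,y_\epsilon)$ for $\epsilon,\theta,\gamma$ small, by continuity of $L,U$; hence the two obstacle alternatives in Definition 6 are ruled out and only the PDE branch can hold. If $x_\epsilon$ or $y_\epsilon$ lies on a face $F_k$, the additional boundary alternative for the subsolution (resp.\ supersolution) is $(q_1)_{k+1}-(q_1)_k\leq 0$ (resp.\ $(q_2)_{k+1}-(q_2)_k\geq 0$). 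Since $\phi$ is chosen symmetric in adjacent coordinates on $F_k$, the $\theta$-contribution to these face differences vanishes and only the $(x_\epsilon-y_\epsilon)$-term survives; a symmetry argument (swapping the $k,k+1$ components of $y_\epsilon$, which leaves $|x_\epsilon-y_\epsilon|^2$ non-increasing when $x_\epsilon\in F_k$) reduces the face case to the interior case, so that only the PDE inequalities need to be subtracted.

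Subtracting the resulting two viscosity inequalities and estimating, using (\textbf{A1}) for the Lipschitz dependence of $G$ in $(y,z)$, the modulus \eqref{27} for the dependence on $x$ (with $|q_1|,|q_2|$ controlled by $|x_\epsilon-y_\epsilon|/\epsilon+\theta|\nabla\phi|$), together with the matrix-trace bound, gives
\begin{equation*}
\frac{\gamma}{(T-t_\epsilon)^2}\leq c\bigl(u_1(t_\epsilon,x_\epsilon)-u_2(t_\epsilon,y_\epsilon)\bigr)+c\theta\bigl[1+|\nabla\phi(x_\epsilon)|+|\nabla\phi(y_\epsilon)|\bigr]+\zeta_R\bigl((1+|q_2|)|x_\epsilon-y_\epsilon|\bigr)+o_\epsilon(1).
\end{equation*}
Sending $\epsilon\to 0$ kills the modulus term because $(1+|q_2|)|x_\epsilon-y_\epsilon|\leq|x_\epsilon-y_\epsilon|+2|x_\epsilon-y_\epsilon|^2/\epsilon+\theta|x_\epsilon-y_\epsilon||\nabla\phi(y_\epsilon)|\to 0$; sending $\theta\to 0$ kills the weight terms; and sending $\gamma\to 0$ contradicts the assumed strict positivity of the supremum. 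The main technical obstacle will be the face case in the doubling argument: one must choose both the weight $\phi$ and the penalty $|x-y|^2/\epsilon$ so that the extra gradient-continuity alternative in Definition 6 can be discarded, and simultaneously keep the Crandall--Ishii matrix inequality uniform as $\epsilon\to 0$ and $\theta\to 0$, using the sub-exponential nature of $e^{A'\log^2(|x|+e)}$ against the bound \eqref{increasing-condition}.
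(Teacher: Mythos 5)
Your overall strategy (doubling of variables, Crandall--Ishii, the modulus \eqref{27} for the $x$-dependence, and a $\log^2$-exponential weight to exploit \eqref{increasing-condition}) is in the same spirit as the paper, but two steps of your argument do not go through as written. The first is the zeroth-order term. Your concluding inequality reads $\frac{\gamma}{(T-t_\epsilon)^2}\leq c\bigl(u_1(t_\epsilon,x_\epsilon)-u_2(t_\epsilon,y_\epsilon)\bigr)+c\theta[\cdots]+\zeta_R(\cdots)+o_\epsilon(1)$, and you then let $\epsilon,\theta,\gamma\to 0$. But the term $c\bigl(u_1(t_\epsilon,x_\epsilon)-u_2(t_\epsilon,y_\epsilon)\bigr)$ does not vanish in this limit: it converges to $c\,(u_1-u_2)(\hat t,\hat x)>0$, which is exactly the quantity you assumed to be positive, while the left-hand side tends to $0$; so there is no contradiction. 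Since $G$ is merely Lipschitz in $y$ (not monotone), one must discount in time before comparing; the paper does this by the factor $e^{c(t-T)}$ in the definition of $N(\rho,T)$ (equivalently a change of unknown $u\mapsto e^{\lambda t}u$ with $\lambda>c$), which makes the $cw$ term cancel exactly against $-\partial_t$ of the test function. Your proposal contains no such mechanism.

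The second gap is the claim that ``sending $\theta\to0$ kills the weight terms.'' Under the growth \eqref{increasing-condition}, the maximizers $x_\epsilon,y_\epsilon$ of $\Psi_\epsilon$ may escape to infinity as $\theta\downarrow 0$; the terms $\theta|\nabla\phi(x_\epsilon)|$ and $\theta\,\mathrm{tr}\bigl(\sigma^2 D^2\phi(x_\epsilon)\bigr)$ (the latter hidden in your ``$O(\theta)$'' for $\mathrm{tr}(\sigma^2(M_1-M_2))$) are of size $\theta\phi(x_\epsilon)$ times factors such as $\log|x_\epsilon|/|x_\epsilon|$, and $\theta\phi(x_\epsilon)$ is only controlled by $u_1(t_\epsilon,x_\epsilon)-u_2(t_\epsilon,y_\epsilon)$, which may itself grow like $e^{A\log^2|x_\epsilon|}$; hence these terms need not tend to $0$. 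This is precisely why the paper does not send the weight to zero. Instead it first proves (Lemma \ref{lemma-difference}) that $w=u-v$ is a viscosity subsolution of a linear obstacle problem, and then compares $w$ with the explicit time-dependent barrier $\psi(t,x)=e^{(A+C(T-t))\phi(x)}$ of Lemma \ref{exp}, whose term $-\partial_t\psi=C\phi\,\psi$ strictly dominates $\mathcal{L}\psi+c\psi+c|\sigma\nabla\psi|$, but only on a window of length $A/C$, whence the backward iteration in time; the weight's derivatives are absorbed by the weight itself rather than sent to zero. Finally, your treatment of the faces is also shaky: swapping the $k,k+1$ coordinates of $y_\epsilon$ produces a point with reversed order, i.e.\ outside $\Gamma^{n}$, so that reduction is not available; the paper sidesteps the issue by working on the shrunk chambers $\Pi^{n}_{\alpha}$ (where the doubling points are interior and only the barrier's gradient enters on $O_j$, where it is strictly positive since $x_{k}-x_{k+1}=\sqrt{2}\alpha$) and then letting $\alpha\to 0$ by continuity.
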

 To deal with the specific features of the ranked data, there are still extra spaces needed to be illustrated.
At the beginning, define a subset of $\Pi ^{n}$ for any given positive $\alpha$:
\[
\Pi ^{n}_{\alpha}:=\{x\in \Pi ^{n}:  \min\limits_{y\;\in\;\partial\Pi ^{n}}|x-y|\geq \alpha\},
\]
and we can choose a  positive constant $\alpha_{0}$ such that $\Pi ^{n}_{\alpha}\neq \emptyset$ for any $0 \leq \alpha \leq \alpha_{0}$. From the definitions of viscosity subsolution and supersolution, we only need to consider a subsolution $u$ and a supersolution $v$, which respectively satisfy
\begin{equation}\label{upper-lower-bound}
u(t,x)\geq L(t,x),\;v(t,x)\leq U(t,x),\; (t,x)\in[0,T)\times \Pi ^{n},
\end{equation}
 and prove $u(t,x)\leq v(t,x)$ on $(0,T)\times \Pi ^{n}_{\alpha}$ for every $0\leq \alpha \leq \alpha_{0}$.

Let us describe the space $\Pi ^{n}_{\alpha}$ more clearly. Define its boundary $O_{j}$:
\[
O_{j}=\mathop{\cup}\limits_{1\leq k_{1}<\dots <k_{j}\leq n-1}\mathop{\cap}\limits_{l=1}^{j}\mathcal{I}_{k_{l}},\;j=1,\cdots,n-1,
\]
with
\[
\mathcal{I}_{k}:=\left\{ x=(x_{1},x_{2},x_{3}
\cdots,x_{n})\in \Pi ^{n}_{\alpha}:x_{k+1}=x_{k}-\sqrt{2}\alpha \right\},\;k=1,\cdots,n-1.
\]
Denote the inner of $\Pi ^{n}_{\alpha}$ by $In(\Pi ^{n}_{\alpha})$, thus
\[
\Pi ^{n}_{\alpha}= \bigg(In(\Pi ^{n}_{\alpha})\bigg)\cup\bigg(\mathop{\cup}\limits_{j=1}^{n-1}O_{j}\bigg).
\]

Before the proof of  \textit{Theorem 6}, let us give the following two lemmas.
\\

\begin{lemma}\label{lemma-difference}
  Suppose that $u(\cdot,\cdot)$ and $v(\cdot,\cdot)$ are the viscosity subsolution and supersolution of ($\ref{4}$) separately, such that $u(t,x)\geq L(t,x)$ and $v(t,x) \leq U(t,x)$ for  all $(t,x)\in[0,T)\times \Pi ^{n}$.Then $w(\cdot,\cdot):=u(\cdot,\cdot)-v(\cdot,\cdot)$ is a viscosity subsolution
  of the following equation,
\begin{eqnarray}
\label{4.2.1}
\left\{\begin{array}{l}
\min\Big\{w(t,x),-\frac{\partial w}{\partial t}(t,x)-
\mathcal{L}w(t,x)-\big(c|w|+c|\sigma\triangledown w |\big)(t,x)\Big\}=0, \;  (t,x)\in [0,T)\times In(\Pi ^{n}_{\alpha}),\\
\sum\limits_{l=1}^{j} \Big(\frac{\partial w}{\partial x_{k_{l}}}(t,x)-\frac{\partial w}{\partial x_{k_{l}+1}}(t,x)\Big)=0, \quad\qquad  \qquad \qquad \qquad  \qquad  \qquad (t,x)\in[0,T)\times O_{j},\;j=1,\cdots,n-1,\\
w(T,x)=0,\qquad \qquad \qquad \qquad \qquad \qquad  \qquad \qquad  \qquad \qquad \qquad  \;\; x\in\Pi ^{n}_{\alpha}.
 \\
\end{array}\right.
\end{eqnarray}
\end{lemma}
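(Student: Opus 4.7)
The strategy is to verify the viscosity subsolution conditions for $w=u-v$ directly by coupling the superjet inequality for $u$ to the subjet inequality for $v$ via the Crandall--Ishii doubling-of-variables technique.

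Fix a test function $\varphi\in C^{1,2}_{b}([0,T]\times \Pi^{n}_{\alpha})$ and a point $(t_{0},x_{0})\in[0,T)\times \Pi^{n}_{\alpha}$ at which $w-\varphi$ attains a local maximum. If $w(t_{0},x_{0})\leq 0$, the subsolution inequality for \eqref{4.2.1} is already satisfied, so I would restrict to $w(t_{0},x_{0})>0$. By the standing hypothesis \eqref{upper-lower-bound} together with Definition 3 applied to $u$ and $v$ separately, one deduces $L\leq v\leq U$ and $L\leq u\leq U$ on $[0,T)\times \Pi^n$, so $L(t_{0},x_{0})\leq v(t_{0},x_{0})<u(t_{0},x_{0})\leq U(t_{0},x_{0})$; these strict inequalities persist on a small neighborhood, which lets me discard the obstacle alternatives in Definition 3 locally and keep only the PDE parts $-P-\delta\cdot q-\tfrac{1}{2}tr(\sigma^{2}M)-G$ active.

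Next I would introduce the penalization
\[
\Psi_{\epsilon,\eta}(t,x,y):=u(t,x)-v(t,y)-\varphi(t,x)-\frac{|x-y|^{2}}{2\epsilon}-\eta\bigl(|x-x_{0}|^{4}+|t-t_{0}|^{2}\bigr),
\]
extract a maximizer $(t_{\epsilon},x_{\epsilon},y_{\epsilon})$ in a closed ball around $(t_{0},x_{0},x_{0})$, and invoke the Maximum Principle for Semicontinuous Functions to produce $(P_{1},q_{1},M_{1})\in \bar{\mathcal{K}}_{u}^{2,+}(t_{\epsilon},x_{\epsilon})$ and $(P_{2},q_{2},M_{2})\in \bar{\mathcal{K}}_{v}^{2,-}(t_{\epsilon},y_{\epsilon})$ with
\[
P_{1}-P_{2}=\tfrac{\partial\varphi}{\partial t}(t_{\epsilon},x_{\epsilon})+O(\eta),\qquad q_{1}-q_{2}=\triangledown\varphi(t_{\epsilon},x_{\epsilon})+O(\eta),
\]
and the standard matrix inequality for $M_{1}-M_{2}$. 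Subtracting the PDE inequality for $u$ from that for $v$, using assumption (\textbf{A1}) to bound $|G(t_{\epsilon},x_{\epsilon},u,\sigma q_{1})-G(t_{\epsilon},x_{\epsilon},v,\sigma q_{2})|\leq c(|u-v|+|\sigma(q_{1}-q_{2})|)$ and \eqref{27} to bound $|G(t_{\epsilon},x_{\epsilon},v,\sigma q_{2})-G(t_{\epsilon},y_{\epsilon},v,\sigma q_{2})|\leq \zeta_{R}((1+|\sigma q_{2}|)|x_{\epsilon}-y_{\epsilon}|)$, and noting that $|x_{\epsilon}-y_{\epsilon}|^{2}/\epsilon\to 0$ forces $|q_{2}|\,|x_{\epsilon}-y_{\epsilon}|\to 0$ so the modulus vanishes, I would send $\epsilon\to 0$ and then $\eta\to 0$ to obtain
\[
-\frac{\partial \varphi}{\partial t}(t_{0},x_{0})-\mathcal{L}\varphi(t_{0},x_{0})-c|w(t_{0},x_{0})|-c|\sigma\triangledown\varphi(t_{0},x_{0})|\leq 0,
\]
which is the required interior subsolution inequality on $In(\Pi^{n}_{\alpha})$.

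For boundary points $(t_{0},x_{0})\in O_{j}$ the argument would run in parallel, but with the extra gradient-jump alternatives from Definition 3 active at each face $F_{k_{l}}$, $l=1,\ldots,j$. After doubling and subtracting, the two alternatives $q_{1,k_{l}+1}-q_{1,k_{l}}$ and $q_{2,k_{l}+1}-q_{2,k_{l}}$ combine through the jet relation $q_{1}-q_{2}=\triangledown\varphi+o(1)$ into $(\tfrac{\partial\varphi}{\partial x_{k_{l}+1}}-\tfrac{\partial\varphi}{\partial x_{k_{l}}})(t_{0},x_{0})$, and summing over $l$ yields the required boundary condition in \eqref{4.2.1}. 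I expect the main obstacle to be exactly this boundary step: since $O_{j}$ sits on the intersection of $j$ distinct faces $\mathcal{I}_{k_{l}}$, the penalization must be tailored so that both $x_{\epsilon}$ and $y_{\epsilon}$ asymptotically touch all $j$ faces simultaneously, otherwise the gradient alternatives contribute only singly rather than combining into the required sum. Adapting the penalty to respect the tangential geometry of $O_{j}$ (for instance by adding quadratic terms that penalize deviation from each $\mathcal{I}_{k_{l}}$) is where the most delicate technical work will be required.
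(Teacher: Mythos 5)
Your interior argument is, in substance, the paper's own proof: after reducing to $w(t_{0},x_{0})>0$ (equivalently the non-contact case \eqref{4.2.2}), the paper doubles variables with the penalization $(u-\varphi)(t,x)-v(t,z)-(|x-z|/\varepsilon)^{2}$ over $[0,T)\times\Pi^{n,R}_{\alpha}$, invokes the Crandall--Ishii maximum principle to produce elements of $\bar{\mathcal K}_{u}^{2,+}(\hat t_{\varepsilon},\hat x_{\varepsilon})$ and $\bar{\mathcal K}_{v}^{2,-}(\hat t_{\varepsilon},\hat z_{\varepsilon})$ with the standard matrix inequality, uses continuity of $L,U,u,v$ to discard the obstacle alternatives for small $\varepsilon$, and subtracts the two inequalities using (\textbf{A1}) and \eqref{27}, letting $\eta\to0$ and $\varepsilon\to0$ to reach \eqref{4.5-1}. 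Your quartic localization term merely replaces the paper's normalization to a strict global maximum, and your cancellation $|q||x_{\epsilon}-y_{\epsilon}|\sim|x_{\epsilon}-y_{\epsilon}|^{2}/\epsilon\to0$ is the same device the paper uses to kill the modulus $\zeta_{R}$. So far the two proofs coincide.

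Where your plan goes astray is the case $(t_{0},x_{0})\in O_{j}$. The sets $O_{j}$ are parts of the boundary of $\Pi^{n}_{\alpha}$ determined by $x_{k_{l}+1}=x_{k_{l}}-\sqrt{2}\,\alpha$; they lie \emph{strictly inside} $\Pi^{n}$, at distance $\alpha$ from the faces $F_{k}\subset\partial\Pi^{n}$ where $x_{k+1}=x_{k}$. Consequently, at points of $O_{j}$ the viscosity definitions for $u$ and $v$ contain only the PDE/obstacle alternatives and no gradient-jump alternatives whatsoever — those are attached to $F_{k}$, not to $\mathcal{I}_{k}$ — so there are no inequalities $q_{1,k_{l}+1}-q_{1,k_{l}}\le0$ or $q_{2,k_{l}+1}-q_{2,k_{l}}\ge0$ to combine, and no tailoring of the penalty forcing $x_{\epsilon},y_{\epsilon}$ onto the faces $\mathcal{I}_{k_{l}}$ can manufacture them; the step you identified as the delicate one would simply fail as described (note also that the term appearing in \eqref{4.2.1} and in \eqref{H} is $\partial w/\partial x_{k_{l}}-\partial w/\partial x_{k_{l}+1}$, the opposite sign of the differences you propose to sum). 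Fortunately no such work is needed: the subsolution property of $w$ at a point of $O_{j}$, as it is used in \eqref{H}, only requires a minimum that now contains the additional alternative $\sum_{l}\big(\partial\varphi/\partial x_{k_{l}}-\partial\varphi/\partial x_{k_{l}+1}\big)$ to be $\le0$, and this follows a fortiori from the interior inequality you have already established, since $O_{j}\subset\Pi^{n}$ and $u,v$ satisfy the interior conditions there. This is precisely why the paper's proof stops at \eqref{4.5-1} and makes no separate argument on $O_{j}$; with that correction your proposal reduces to the paper's proof.
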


\begin{proof}Let $\varphi \in C^{1,2}_{b}([0,T] \times \Pi ^{n}_{\alpha})$ and $w-\varphi$ achieves a local maximum point at $(\hat t,\hat x)\in [0,T)\times \Pi ^{n}_{\alpha}$ with $\varphi(\hat t,\hat x)-w(\hat t,\hat x)=0$. Without loss of generality, it is reasonable for us to assume that $(\hat t,\hat x)$ is a strict global maximum point of the function $w-\varphi$ and $\varphi$ can be modified if it is needed. The ultimate purpose is to prove that,
\begin{equation}\label{4.5-1}
\min\left\{w(\hat t,\hat x), \Big(-\frac{\partial \varphi}{\partial t}(\hat t,\hat x)-
\mathcal{L}\varphi(\hat t,\hat x)-\big(c|w|+c|\sigma\triangledown \varphi|\big)(\hat t,\hat x)\Big)\right\} \leq 0.
\end{equation}
By the definitions of $u$ and $v$, we observe that $u(t,x)\leq U(t,x)$ and $v(t,x) \geq L(t,x)$. With the assumptions in Lemma \ref{lemma-difference}, $u$ and $v$ take values in $[L(t,x), U(t,x)]$. Furthermore,  if either  $v(\hat t,\hat x)= U(\hat t,\hat x)$ or $u(\hat t,\hat x)= L(\hat t,\hat x)$, it is obviously to obtain that $w(\hat t,\hat x) \leq 0$ and \eqref{4.5-1} exists. Therefore, we just need to consider
\begin{eqnarray}
\label{4.2.2}
u(\hat t,\hat x)> L(\hat t,\hat x) \quad and \quad U(\hat t,\hat x)> v(\hat t,\hat x).
\end{eqnarray}
Now for a given positive $\varepsilon$, define
\[\mathcal{Q}_{\varepsilon}(t,x,z):=(u-\varphi)(t,x)-v(t,z)-\left(\frac{|x-z|}{\varepsilon}\right)^{2}\quad and \quad \Pi ^{n,R}_{\alpha}:= \Pi ^{n}_{\alpha}\cap S_{R},\]
where $R$ is chosen big enough such that the point $(\hat t,\hat x)$ belongs to $[0,T) \times \Pi ^{n,R}_{\alpha}$ and $S_{R}$ is an open ball in $\mathbb{R}^{n}$ with the origin as the center and radius of $R$. Let a point $(\hat t_{\varepsilon},\hat x_{\varepsilon}, \hat z_{\varepsilon})$ satisfy:
 \[
 (\hat t_{\varepsilon},\hat x_{\varepsilon}, \hat z_{\varepsilon})\ \in \operatorname{argmax} \limits_{[0,T) \times\Pi ^{n,R}_{\alpha}}\mathcal{Q}_{\varepsilon}(t,x,z).
 \]
Then applying the classical method involved in  \cite[Proposition 3.7]{Crandall}, we get

\begin{enumerate}[(i)]
\setlength{\parsep}{0ex} 
\setlength{\itemsep}{0ex}
\item $(\hat t_{\varepsilon},\hat x_{\varepsilon}, \hat z_{\varepsilon})\stackrel{\varepsilon \rightarrow 0}{\longrightarrow} (\hat t,\hat x,\hat x) $;
\item $\frac{|\hat x_\varepsilon- \hat z_\varepsilon|^{2}}{\varepsilon^{2}}$ $\stackrel{\varepsilon \rightarrow 0}{\longrightarrow} 0 $.
\end{enumerate}

On the basis of \cite[Theorem 8.3]{Crandall}, for a fixed positive $\varepsilon$ and any $\eta >0$, there exists $(X_{\eta}, Z_{\eta}) \in S^{n} \times S^{n}$ and $c_{\eta} \in \mathbb{R}$ satisfing
\begin{equation*}
\begin{aligned}
\Big(c_{\eta}+\frac{\partial \varphi }{\partial t}( \hat t_{\varepsilon},\hat x_{\varepsilon}),q_{\varepsilon}+\triangledown
\varphi( \hat t_{\varepsilon}, \hat x_{\varepsilon}),X_{\eta}\Big)\in \bar{\mathcal K}_{u}^{2,+}(\hat t_{\varepsilon},\hat  x_{\varepsilon})\quad and \quad
\Big(c_{\eta},q_{\varepsilon},Z_{\eta}\Big)\in \bar{\mathcal K}_{v}^{2,-}(\hat t_{\varepsilon},\hat z_{\varepsilon}),
\end{aligned}
\end{equation*}
and
\begin{equation*}
\begin{aligned}
\begin{pmatrix}
X_{\eta}& 0\\
0 & -Z_{\eta}
\end{pmatrix} \leq H+\eta H^{2},
\end{aligned}
\end{equation*}
where $q_{\varepsilon}:=\frac{2( \hat x_{\varepsilon}-\hat z_{\varepsilon})}{\varepsilon^{2}}$, $H:=\begin{pmatrix} D^{2}\varphi(\hat t_{\varepsilon},\hat x_{\varepsilon})+2/\varepsilon^{2}& -2/\varepsilon^{2}\\
-2/\varepsilon^{2}& 2/\varepsilon^{2}
\end{pmatrix} $.

We can get that
\begin{equation*}
\begin{aligned}
H+\eta H^{2}=2\hat I/\varepsilon^{2} +\begin{pmatrix}
D^{2}\varphi(\hat t_{\varepsilon}, \hat x_{\varepsilon})& 0\\
0 & 0
\end{pmatrix} +\eta N(\varepsilon),
\end{aligned}
\end{equation*}
where
\begin{equation*}
\hat I:=\begin{pmatrix} I & -I\\
-I & I
\end{pmatrix},
\end{equation*}
and
\begin{equation*}
\begin{aligned}
N(\varepsilon)=\frac{8}{\varepsilon^{4}}\hat I+\frac{2}{\varepsilon^{2}}\begin{pmatrix}
2D^{2}\varphi(\hat t_{\varepsilon}, \hat x_{\varepsilon})+\varepsilon^{2}(D^{2}\varphi(\hat t_{\varepsilon}, \hat x_{\varepsilon}))^{2}/2& -D^{2}\varphi(\hat t_{\varepsilon}, \hat x_{\varepsilon})\\
-D^{2}\varphi(\hat t_{\varepsilon}, \hat x_{\varepsilon}) & 0
\end{pmatrix}.
\end{aligned}
\end{equation*}
It follows from $v$ and $u$ being the viscosity subsolutuion and supersolutuion of ($\ref{4}$) that
\begin{eqnarray*}
\label{4.2.3}
\left\{\begin{array}{l}
\min\Big\{u(\hat t_{\varepsilon}, \hat x_{\varepsilon})-L(\hat t_{\varepsilon}, \hat x_{\varepsilon}), \max\Big\{ u(\hat t_{\varepsilon}, \hat x_{\varepsilon})-U(\hat t_{\varepsilon}, \hat x_{\varepsilon}), -c_{\eta} -\frac{\partial \varphi}{\partial t}(\hat t_{\varepsilon}, \hat x_{\varepsilon}) -\frac{1}{2}tr(\sigma^{2}X_{\eta})\\
\qquad \qquad \qquad \qquad \qquad \qquad \qquad -\delta \cdot (q_{\varepsilon}+\triangledown
\varphi(\hat t_{\varepsilon}, \hat x_{\varepsilon})) -G\big(t,x,u(\hat t_{\varepsilon}, \hat x_{\varepsilon}),\sigma q_{\varepsilon}+\sigma\triangledown
\varphi(\hat t_{\varepsilon}, \hat x_{\varepsilon})\big)\Big\}\Big\} \leq 0,\vspace{1ex}
\\
\max\Big\{v(\hat t_{\varepsilon},\hat  z_{\varepsilon})-U(\hat t_{\varepsilon},\hat  z_{\varepsilon}),\min \Big\{ v(\hat t_{\varepsilon},\hat  z_{\varepsilon})-L(\hat t_{\varepsilon},\hat  z_{\varepsilon}),-c_{\eta} -\frac{1}{2}tr(\sigma^{2}Z_{\eta})-\delta \cdot q_{\varepsilon}\\
\qquad \qquad \qquad \qquad\qquad \qquad \qquad-G\big(t,z,v(\hat t_{\varepsilon},\hat  z_{\varepsilon}),\sigma q_{\varepsilon}\big) \Big\} \Big\}\geq 0. \\
\end{array}\right.
\end{eqnarray*}
Note that $(\hat t_{\varepsilon},\hat x_{\varepsilon},\hat z_{\varepsilon})\rightarrow (\hat t,\hat x, \hat x) \;as \; \varepsilon \rightarrow 0$ and due to continuity of the $L,U ,u,v$, there exists sufficiently small $\varepsilon>0 $ such that  $(\hat t_{\varepsilon}, \hat x_{\varepsilon}) $ and $(\hat t_{\varepsilon},\hat  z_{\varepsilon}) $ satisfy ($\ref{4.2.2}$), i.e., $$L(\hat t_{\varepsilon}, \hat x_{\varepsilon})<u(\hat t_{\varepsilon}, \hat x_{\varepsilon})\leq U(\hat t_{\varepsilon}, \hat x_{\varepsilon}),\qquad L(\hat t_{\varepsilon}, \hat z_{\varepsilon})\leq v(\hat t_{\varepsilon}, \hat z_{\varepsilon})<U(\hat t_{\varepsilon}, \hat z_{\varepsilon}).$$ Therefore,
\begin{eqnarray*}
\label{4.2.4}
\left\{\begin{array}{l}
-c_{\eta} -\frac{\partial \varphi}{\partial t}(\hat t_{\varepsilon}, \hat x_{\varepsilon}) -\frac{1}{2}tr(\sigma^{2}X_{\eta})-\delta \cdot (q_{\varepsilon}+\triangledown
\varphi(\hat t_{\varepsilon}, \hat x_{\varepsilon}) ) -G\big(\hat t_{\varepsilon}, \hat x_{\varepsilon},u(\hat t_{\varepsilon}, \hat x_{\varepsilon}),\sigma q_{\varepsilon}+\sigma\triangledown
\varphi(\hat t_{\varepsilon}, \hat x_{\varepsilon})\big)\leq 0,
\\
-c_{\eta} -\frac{1}{2}tr(\sigma^{2}Z_{\eta})-\delta \cdot q_{\varepsilon}
-G\big(\hat t_{\varepsilon}, \hat z_{\varepsilon},v(\hat t_{\varepsilon}, \hat z_{\varepsilon}),\sigma q_{\varepsilon}\big)\geq 0.
\end{array}\right.
\end{eqnarray*}
By above two inequalities,
\begin{equation*}
\begin{aligned}
&\frac{\partial \varphi}{\partial t} (\hat t_{\varepsilon}, \hat x_{\varepsilon})+\frac{1}{2}(tr(\sigma^{2}X_{\eta})-tr(\sigma^{2}Z_{\eta}))+\delta \cdot \triangledown
\varphi (\hat t_{\varepsilon}, \hat x_{\varepsilon} )\\&+G\big(\hat t_{\varepsilon},\hat x_{\varepsilon},u(\hat t_{\varepsilon}, \hat x_{\varepsilon}),\sigma(q_{\varepsilon}+\triangledown
\varphi(\hat t_{\varepsilon}, \hat x_{\varepsilon}))\big)-G\big(\hat t_{\varepsilon},\hat z_{\varepsilon},v(\hat t_{\varepsilon}, \hat z_{\varepsilon}),\sigma q_{\varepsilon}\big) \geq 0.
\end{aligned}
\end{equation*}
Considering the first part of the above equation, it is easy to find
\begin{equation*}
\frac{1}{2}(tr(\sigma^{2}X_{\eta})-tr(\sigma^{2}Z_{\eta})) \leq \frac{1}{2}tr(\sigma^{2}D^{2}\varphi(\hat t_{\varepsilon}, \hat x_{\varepsilon}))+\frac{\eta}{2}P_{\varepsilon},
\end{equation*}
in which
\[P_{\varepsilon} :=  \Bigg \langle
\begin{pmatrix}
\sigma\\
\sigma
\end{pmatrix} , N(\varepsilon)\begin{pmatrix}
\sigma\\
\sigma
\end{pmatrix}
\Bigg \rangle . \]
For the second part, $\zeta$ is used as the modulus $\zeta_{R}$ in ($\ref{27}$) when $R$ tends to infinity,
\begin{equation*}
\begin{aligned}
&G\big(\hat t_{\varepsilon},\hat x_{\varepsilon},u(\hat t_{\varepsilon}, \hat x_{\varepsilon}),\sigma q_{\varepsilon}+\sigma \triangledown
\varphi(\hat t_{\varepsilon}, \hat x_{\varepsilon}))\big)-G\big(\hat t_{\varepsilon},\hat z_{\varepsilon},v(\hat t_{\varepsilon}, \hat z_{\varepsilon}),\sigma q_{\varepsilon}\big)
\\&=G\big(\hat t_{\varepsilon},\hat x_{\varepsilon},u(\hat t_{\varepsilon}, \hat x_{\varepsilon}),\sigma q_{\varepsilon}+\sigma\triangledown
\varphi(\hat t_{\varepsilon}, \hat x_{\varepsilon})\big)-G\big(\hat t_{\varepsilon},\hat x_{\varepsilon},v(\hat t_{\varepsilon}, \hat z_{\varepsilon}),\sigma q_{\varepsilon}\big)
\\&+G\big(\hat t_{\varepsilon},\hat x_{\varepsilon},v(\hat t_{\varepsilon}, \hat z_{\varepsilon}),\sigma q_{\varepsilon}\big) -G\big(\hat t_{\varepsilon},\hat z_{\varepsilon},v(\hat t_{\varepsilon}, \hat z_{\varepsilon}),\sigma q_{\varepsilon}\big)
\\
&\leq c|u(\hat t_{\varepsilon}, \hat x_{\varepsilon})-v(\hat t_{\varepsilon}, \hat z_{\varepsilon})|+c|\sigma \triangledown
\varphi(\hat t_{\varepsilon}, \hat x_{\varepsilon})|+\zeta(|\hat x_{\varepsilon}-\hat z_{\varepsilon}|(|\sigma q_{\varepsilon}|+1)).
\end{aligned}
\end{equation*}
Letting $\eta \rightarrow 0$ and $\varepsilon \rightarrow 0$,
\[
-\frac{\partial \varphi}{\partial t}(\hat t,\hat x)-\delta \cdot \triangledown
\varphi(\hat t,\hat x) -c|w(\hat t,\hat x)|-c|\sigma \triangledown
\varphi(\hat t,\hat x)|-\frac{1}{2}tr(\sigma^{2}D^{2}\varphi(\hat t,\hat x)) \leq 0 .
\]
Therefore,
\[
\min\left\{w(\hat t,\hat x), \left(-\frac{\partial \varphi}{\partial t}(\hat t,\hat x)-
\mathcal{L}\varphi(\hat t,\hat x)-c|w(\hat t,\hat x)|-c|\sigma\triangledown \varphi (\hat t,\hat x)|\right) \right\}\leq 0.
\]
\end{proof}

The next lemma has been proved in \cite{feng1}.

\begin{lemma}\label{exp}
 For any fixed positive constant $A$, there is a positive $C$ and a function $\phi (x):=(\frac{1}{2}\log(1+|x|^{2})+1)^{2}$ such that
 \begin{equation}
 \label{psi}
 \psi(t,x):=e^{(A+C(T-t))\phi (x)},
 \end{equation}
satisfies
\begin{equation*}
-\frac{\partial \psi}{\partial t}(t,x)-
\mathcal{L}\psi(t,x)-c\big(\psi(t,x)+|\sigma \triangledown \psi(t,x) |\big)>0, \quad (t,x)\in [t_{1},T]\times \Pi^{n},
\end{equation*}
in which $c$ is the Lipschitz constant of $G$ and $t_{1}:=(T-\frac{A}{C})^{+}$.
\end{lemma}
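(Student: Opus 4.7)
The plan is to obtain the desired inequality by a direct calculation, exploiting the fact that on the interval $[t_{1},T]$ with $t_{1}=(T-A/C)^{+}$ the exponential prefactor $\lambda(t):=A+C(T-t)$ satisfies $A\le\lambda(t)\le 2A$, \emph{independently of the eventual choice of} $C$. The first step is to compute, for $\psi(t,x)=e^{\lambda(t)\phi(x)}$,
\[
\partial_{t}\psi=-C\phi\,\psi,\qquad \nabla\psi=\lambda(\nabla\phi)\,\psi,\qquad \partial_{x_{i}}^{2}\psi=\bigl(\lambda^{2}(\partial_{x_{i}}\phi)^{2}+\lambda\,\partial_{x_{i}}^{2}\phi\bigr)\psi,
\]
divide the target inequality through by $\psi>0$, and reduce the problem to verifying
\[
C\phi \;>\; \tfrac{\lambda^{2}}{2}\sum_{i}\sigma_{i}^{2}(\partial_{x_{i}}\phi)^{2}+\tfrac{\lambda}{2}\sum_{i}\sigma_{i}^{2}\partial_{x_{i}}^{2}\phi+\lambda\sum_{i}\delta_{i}\,\partial_{x_{i}}\phi+c+c\lambda|\sigma\nabla\phi|.
\]

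The second step is to estimate $\phi$, $\nabla\phi$ and $D^{2}\phi$ in a self-similar way. Writing $f(x):=\tfrac{1}{2}\log(1+|x|^{2})+1$ so that $\phi=f^{2}$, one checks $|\nabla f|=|x|/(1+|x|^{2})\le 1/2$ and $|D^{2}f|$ uniformly bounded; hence $|\nabla\phi|^{2}=4f^{2}|\nabla f|^{2}\le\phi$ and $|D^{2}\phi|\le K(1+\sqrt{\phi})$ for a dimension-dependent constant $K$, together with $\phi\ge 1$ everywhere. Plugging these bounds into the right-hand side yields an upper bound of the form $2A^{2}(\max_{i}\sigma_{i}^{2})\phi+(AK_{1}+cK_{2})\sqrt{\phi}+(AK_{3}+c)$ for constants $K_{1},K_{2},K_{3}$ depending only on $n$, $\sigma$ and $\delta$.

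The third step is simply to choose $C$. Using $\phi\ge 1$, it suffices to take $C$ strictly larger than $2A^{2}\max_{i}\sigma_{i}^{2}+AK_{1}+cK_{2}+AK_{3}+c$; this produces a uniform strict positive lower bound on $[t_{1},T]\times\Pi^{n}$, and automatically makes $t_{1}=T-A/C$ a well-defined time in $[0,T)$.

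The main obstacle is the quadratic-in-$\lambda$ term $\tfrac{\lambda^{2}}{2}\sum_{i}\sigma_{i}^{2}(\partial_{x_{i}}\phi)^{2}$: \emph{a priori} it could scale like $C^{2}\phi$ and overwhelm the useful $C\phi$ coming from $-\partial_{t}\psi$. The entire purpose of restricting $t$ to $[t_{1},T]$ is precisely to cap $\lambda$ by $2A$ so that this dangerous term becomes at most of order $A^{2}\phi$, hence absorbable by $C\phi$ for $C$ chosen large \emph{after} $A$ is fixed. Once this is handled, the remaining $O(\sqrt{\phi})$ and $O(1)$ errors are subleading (because $\phi\ge 1$) and cause no difficulty.
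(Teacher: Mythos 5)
Your computation is correct and is essentially the argument the paper relies on: the paper gives no proof here but defers to the cited reference (Chen--Feng, Sci.\ China Math.\ 2018), where the same direct verification with the weight $e^{(A+C(T-t))\phi(x)}$ is carried out, resting on exactly your key observation that $\lambda(t)=A+C(T-t)\le 2A$ on $[t_{1},T]$ for every $C>0$, so the quadratic gradient term is $O(A^{2}\phi)$ uniformly in $C$ and can be beaten by $C\phi$ with $C$ chosen after $A$. The only nit is constant bookkeeping: the term $c\lambda|\sigma\nabla\phi|$ contributes a product of order $Ac\sqrt{\phi}$ rather than your stated $(AK_{1}+cK_{2})\sqrt{\phi}$, which changes nothing since the coefficient is still independent of $C$ and is absorbed, via $1\le\sqrt{\phi}\le\phi$, by the final choice of $C$.
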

\hspace*{1em}

\textbf{Proof of Theorem 6:} The purpose is to prove that $w\leq 0 $ on $[0,T]\times \Pi^{n}_{\alpha}$ with $\alpha$ varying from $0$ to $\alpha_{0}$.
According to \eqref{increasing-condition}, we have
\[
\lim\limits_{|x|\rightarrow \infty}e^{-A(\frac{1}{2}\log(1+|x|^{2})+1)^{2}}|w(t,x)|=0.
\]
Now define the following function:
\begin{equation*}
N(\rho, T):=\max\limits_{[t_{1},T]\times \Pi^{n}_{\alpha}}\big(w(t,x)-\rho \psi(t,x)\big)e^{c(t-T)},\ for \ any \ \rho >0 .
\end{equation*}
Noticing  the fact that $|w(t,x)|<\rho e^{A\big(\frac{1}{2}\log(1+|x|^{2})+1\big)^{2}} \leq \rho\psi(t,x)$, $N(\rho, T)$ is achieved at point $(t^{*},x^{*})$.

We now need to show that $N(\rho, T) \leq 0$ for all positive $\rho$. Suppose that if $t^{*}=T$, $N(\rho, T) $ is no greater than $0$ owing to $w(T,x)\leq 0$.

As for the other situation  $t^{*}<T$, suppose $N(\rho, T) > 0$. We can get
\[
w(t^{*},x^{*})=\rho \psi(t^{*},x^{*})+e^{-c(t^{*}-T)}N(\rho, T)>0.
\]
Furthermore, let us set
\[
\mathcal{H}(t,x):=N(\rho,T)e^{c(T-t)}+\rho \psi(t,x).
\]
Obviously, $\mathcal{H}(t^{*},x^{*})=w(t^{*},x^{*})>0$ and
\begin{equation*}
w(t,x)-\mathcal{H}(t,x) \leq 0, \;(t,x) \in [t_{1},T]\times \Pi^{n}_{\alpha}.
\end{equation*}
Since $w$ is a viscosity subsolution of ($\ref{4}$), for the case $x^{*} \in In(\Pi^{n}_{\alpha})$,
\[
\min\left\{w(t^{*},x^{*}), \left(-\frac{\partial\mathcal{H}}{\partial t}(t^{*},x^{*})-
\mathcal{L}\mathcal{H}(t^{*},x^{*})-cw(t^{*},x^{*})-c|\sigma\triangledown \mathcal{H}(t^{*},x^{*})|\right)\right\}\leq 0 ;
\]
for the case when $x^{*} \in O_{j}$ for some $j=1,...,n-1$,
\begin{equation}\label{H}
\begin{aligned}
\min\Bigg\{\Bigg(\sum_{l=1}^{j}&\left( \frac{\partial \mathcal{H}}{\partial x_{k_{l}} }(t^{*},x^{*})  -\frac{\partial \mathcal{H}}{\partial x_{k_{l}+1} }(t^{*},x^{*}) \right )\Bigg),  w(t^{*},x^{*})
, \\
&\left(-\frac{\partial \mathcal{H}}{\partial t}(t^{*},x^{*})-
\mathcal{L}\mathcal{H}(t^{*},x^{*})-cw(t^{*},x^{*})-c|\sigma\triangledown \mathcal{H}(t^{*},x^{*})|\right)\Bigg\} \leq 0.
\end{aligned}
\end{equation}
For $x^{*} \in O_{j}$, by some routine calculations we can see that the first partition in ($\ref{H}$) is greater than $0$.
Consequently, it can be
\[
-\frac{\partial \mathcal{H}}{\partial t}(t^{*},x^{*})-
\mathcal{L}\psi(t^{*},x^{*})-cw(t^{*},x^{*})-c|\sigma\triangledown \mathcal{H}(t^{*},x^{*})| \leq 0.
\]
It is equivalent with
\[
\rho \left(-\frac{\partial \psi}{\partial t}(t^{*},x^{*})-
\mathcal{L}\psi(t^{*},x^{*})-cw(t^{*},x^{*})-c|\sigma\triangledown \psi(t^{*},x^{*})| \right)\leq 0,
\]
which is  contrary  to the Lemma \ref{exp}. Hence $N(\rho, T)\leq 0$ with $t^{*}<T$. By the definition of $N(\rho, T)$, we obtain $w\leq 0$. Repeating the same discussion on $[t_{i},t_{i-1}]$ for $i=1,2,\cdots$, where $t_{i+1}:=(t_{i}-\frac{A}{C})^{+}$. Finally, we obtain $w\leq 0$ for every point $(t,x)$ in $[0,T] \times \Pi^{n}_{\alpha}$ after going through the above processes for at most $\frac{CT}{A}$ steps. \qed

\section{Dynkin Game and American Game Option}

In this part, we will study the application in American game option with  ranked data based on  Dynkin game. In this practical problem, the following spaces are given according to the aforementioned definitions:
\begin{itemize}
\setlength{\itemsep}{0pt}
\item $\Pi^{n,+}:\{x\in\mathbb{R}^{n}:0<x_{n}<x_{n-1}<\cdots<x_{1}\}$.
\item $F_{k}^{+}:=\{x\in\partial\Pi^{n,+}: 0<x_{n}<x_{n-1}<\cdots<x_{k+1}=x_{k}<\cdots<x_{1}<0 \},\;k=1,\cdots,n-1$.
\item $\Gamma^{n,+}:=F_{1}^{+}\cup F_{2}^{+} \cup \cdots \cup F_{n-1}^{+}\cup \Pi^{n,+}$.
\end{itemize}

We consider a financial market containing $n$ stocks and one bond. More precisely, the values $\{P_{0}^{t, \hat p},P_{k}^{t, \hat p},k=1,\cdots,n\}$ of the assets satisfy:

\begin{equation}
\label{5}
\left\{\begin{aligned}
P_{0}^{t, \hat p}(a)=& p_{0}+\int_{t}^{a} r(s) P_{0}^{t, \hat p}(s) \mathrm{d}s, \\
P_{k}^{t, \hat p}(a)=& p_{k}+\int_{t}^{a} P_{k}^{t, \hat p}(s)\left(\sum_{i=1}^{n} \delta_{i}\textbf{1}_{\left\{P_{k}^{t, \hat p }(s)=P_{(i)}^{t, \hat p}(s)\right\}} \mathrm{~d} s\right.\left.+\sum_{i=1}^{n}\sigma_{i} \textbf{1}_{\left\{P_{k}^{t,\hat p}(s)=P_{(i)}^{t,\hat p}(s)\right\}} d W_{k}(s)\right) ,
\end{aligned}\right.
\end{equation}
where $a\geq t$, $\hat p:=(p_{0},p_{1},\cdots,p_{n})\in \mathbb{R^{+}}\times\Gamma^{n,+}$ and $\{\sigma^{2}_{1},\cdots,\sigma^{2}_{n}\}$  satisfies $\sigma_{i+1}^{2}\geq \frac{1}{2}(\sigma_{i}^{2}+\sigma_{i+2}^{2})$ for $i=1,\ldots,n-2$.
Moreover, based on the notations in \textit{Section 2}, the ranked price evolves following:
\[
d P_{(i)}^{t, \hat p}(a)=P_{(i)}^{t, \hat p}(a)\left(\delta_{i} d r+\sigma_{i} d \beta_{i}(a)+\frac{1}{2}d \Lambda_{i, i+1}(a)-\frac{1}{2}d \Lambda_{i-1, i}(a)\right), i=1, 2,3,\ldots, n.
\]
More detailed explanations can be found in \cite{feng2}. For simplicity, let us assume that the interest rate $r(s)$ is a constant in the subsequence context and denote it as $r_{0}$. The relevant results can be trivially extended to the situation that $r(s)$ is a deterministic function.

\hspace*{1em}
\subsection{Dynkin Game}
\hspace*{1em}

Before looking at the American game option, firstly we explore the Dynkin game. Suppose there are two players in the market and both of them have the right to choose a stopping time to terminate the game, i.e., $\lambda$, $\tau$ respectively, which are $\{\mathcal F_t^W\}$-stopping times. At the stopping time, player $A$ needs to pay for player $B$ with the following amounts:
\begin{equation}
\label{5.1}
R_{t}(\lambda,\tau):=\xi\textbf{1}_{\{\lambda \wedge \tau = T\}}+U(\lambda)\textbf{1}_{\{\lambda < \tau \}}+L(\tau)\textbf{1}_{\{\tau<T,\tau \leq \lambda\}}.
\end{equation}
That is, suppose that player $A$ stops the game first, i.e., $\lambda < \tau $, $A$ should pay $U(\lambda)$ to $B$. If $B$ stops first, $A$ pays $L(\tau)$ to $B$. If no player stops before the time $T$, $A$ will pay $\xi$ to $B$. Thus, player $A$ wants to minimize his/her cost, nevertheless, player $B$ wants to maximize his/her reward. Define the upper and lower values,
\begin{equation*}
\begin{aligned}
&\underline{V}(t):=\mathop{esssup}\limits_{\tau \in \mathcal{M}_{t,T}} \mathop{essinf}\limits_{\lambda \in \mathcal{M}_{t,T}}\mathbb E\left[R_{t}(\lambda,\tau) \mid \mathcal{F}_{t}\right],\\
&\overline{V}(t):= \mathop{essinf}\limits_{\lambda \in \mathcal{M}_{t,T}} \mathop{esssup}\limits_{\tau \in \mathcal{M}_{t,T}}\mathbb E\left[R_{t}(\lambda,\tau) \mid \mathcal{F}_{t}\right],
\end{aligned}
\end{equation*}
 in which $ \mathcal{M}_{t,T}$ represents the class of $\{\mathcal F_t^W\}$-stopping times valuing in $[t,T]$. If $V:=\underline{V}=\overline{V}$, $V(t)$ is referred to be the value of the above Dynkin game.

Considering this following simplified equations:
\begin{eqnarray}
\label{5.1.1}
\left\{\begin{array}{l}
Y(t)=g(\widetilde{X}(T))+\displaystyle\int_{t}^{T}G(s,\widetilde{X}(s),Y(s),\hat{Z}(s))ds -\int_{t}^{T}\hat{Z}(s)
\cdot d\beta(s)+K(T)-K(t),\\
\displaystyle L(t,\widetilde{X}(t))\leq Y(t)\leq U(t,\widetilde{X}(t)),\quad t\in[0,T],\\
\displaystyle\int_{0}^{T}\big[Y(s)-L(s,\widetilde{X}(s))\big]dK^{+}(s)=\displaystyle\int_{0}^{T}\big[ U(s,\widetilde{X}(s))-Y(s)\big]dK^{-}(s)=0.
\end{array}\right.
\end{eqnarray}

Under assumptions $\textbf{(A1)}-\textbf{(A3)}$, we know that \eqref{5.1.1} admits a unique solution.
By \cite[Theorem 4.1]{Karatzas}, we have the following result.
\begin{theorem}
Under assumptions $\textbf{(A1)}-\textbf{(A3)}$, the game (\ref{5.1}) with $\xi := g(\widetilde{X}(T))$, $L(t):=L(t,\widetilde{X}(t))$ and $U(t):=U(t,\widetilde{X}(t))$ has the value $V(t)$, given by backward component $Y$ of the solution to \eqref{5.1.1} for all $0\leq t\leq T$, i.e.,
\[
V(t):=\underline{V}(t)=\overline{V}(t)=Y(t),\; a.s..
\]
Futhermore,  there exists a saddle-point $(\hat\lambda_{t},\hat\tau_{t})\in \mathcal{M}_{t,T} \times \mathcal{M}_{t,T}$,
\[
\hat\lambda_{t}:=\min\Big\{T,\inf\Big\{a\in[t,T)|Y(a)=U(a)\Big\}\Big\}, \]
\[
\hat\tau_{t}:=\min\Big\{T,\inf\Big\{a\in[t,T)|Y(a)=L(a)\Big\}\Big\},
\]
that is,
\[
\mathbb E\left[R_{t}(\hat\lambda_{t},\tau ) \mid \mathcal{F}_{t}\right]\leq\mathbb E\left[R_{t}(\hat\lambda_{t},\hat\tau_{t} ) \mid \mathcal{F}_{t}\right]=Y(t)\leq\mathbb E\left[ R_{t}(\lambda,\hat\tau_{t})\mid \mathcal{F}_{t}\right],\; a.s.,
\]
for point $(\lambda,\tau)$ in $\mathcal{M}_{t,T} \times \mathcal{M}_{t,T}$.
\end{theorem}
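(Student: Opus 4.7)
The plan is to observe that (5.1.1) is a genuine two-barriers-reflected BSDE driven by the $n$-dimensional $(\mathcal F^{W}_t)$-Brownian motion $\beta=(\beta_{1},\dots,\beta_{n})$ constructed in (2.1). Since each $\beta_{k}$ is built from $W$ by stochastic integration against the indicator processes of the ranks, the filtrations generated by $W$ and $\beta$ coincide, so $\mathcal M_{t,T}$ is the same in either picture. Because $\widetilde X$ has continuous paths and $L,U$ are continuous, the barrier processes $L(\cdot,\widetilde X(\cdot))$ and $U(\cdot,\widetilde X(\cdot))$ are continuous with $L<U$ and $L(T,\widetilde X(T))\le g(\widetilde X(T))\le U(T,\widetilde X(T))$ by (A3). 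Thus (5.1.1) meets the hypotheses of \cite[Theorem 4.1]{Karatzas}, and the whole result reduces to verifying that theorem in our setting.

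Concretely, I would proceed in three steps. First I would identify the value of $Y$ at the candidate saddle point. By (iv) of Theorem \ref{th1}, $K^{+}$ increases only on $\{Y=L\}$ and $K^{-}$ only on $\{Y=U\}$; combined with the definitions of $\hat\lambda_{t}$ and $\hat\tau_{t}$, this shows that on $[t,\hat\lambda_{t}\wedge\hat\tau_{t})$ both $K^{+}$ and $K^{-}$ are constant, while
\begin{equation*}
Y(\hat\lambda_{t}\wedge\hat\tau_{t})=U(\hat\lambda_{t})\mathbf 1_{\{\hat\lambda_{t}<\hat\tau_{t}\}}+L(\hat\tau_{t})\mathbf 1_{\{\hat\tau_{t}\le\hat\lambda_{t},\,\hat\tau_{t}<T\}}+g(\widetilde X(T))\mathbf 1_{\{\hat\lambda_{t}\wedge\hat\tau_{t}=T\}}=R_{t}(\hat\lambda_{t},\hat\tau_{t}).
\end{equation*}

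Next I would linearise the nonlinear generator. Using (A1) I write $G(u,\widetilde X(u),Y(u),\hat Z(u))=G(u,\widetilde X(u),0,0)+\alpha(u)Y(u)+\eta(u)\cdot\hat Z(u)$ with bounded $(\alpha,\eta)$, and apply the Girsanov change of measure $d\mathbb Q=N^{t}(T)d\mathbb P$ exactly as in the proof of the existence theorem to remove the $\hat Z$ term; absorbing $\alpha$ in an exponential discount $\Gamma(u):=\exp(\int_{t}^{u}\alpha(s)ds)$ turns (5.1.1) into a linear equation under $\mathbb Q$. Integrating over $[t,\hat\lambda_{t}\wedge\hat\tau_{t}]$, using Step 1, and taking $\mathcal F_{t}$-conditional $\mathbb Q$-expectation gives $Y(t)=\mathbb E[R_{t}(\hat\lambda_{t},\hat\tau_{t})\mid\mathcal F_{t}]$ (after the usual identification of the linear Feynman–Kac representation with the original $\mathbb P$-expectation through the martingale $N^{t}$).

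Finally I would prove the two saddle-point inequalities. Fix $\tau\in\mathcal M_{t,T}$ and integrate (5.1.1) over $[t,\hat\lambda_{t}\wedge\tau]$: on $[t,\hat\lambda_{t})$ one has $Y<U$, so $K^{-}$ is constant there, whence $K(\hat\lambda_{t}\wedge\tau)-K(t)=K^{+}(\hat\lambda_{t}\wedge\tau)-K^{+}(t)\ge 0$. A case analysis on $\{\tau<\hat\lambda_{t}\}$, $\{\hat\lambda_{t}\le\tau<T\}$, $\{\hat\lambda_{t}\wedge\tau=T\}$ shows $Y(\hat\lambda_{t}\wedge\tau)\le R_{t}(\hat\lambda_{t},\tau)$; combining the sign of the $K^{+}$-increment with the same Girsanov linearisation delivers $Y(t)\le\mathbb E[R_{t}(\hat\lambda_{t},\tau)\mid\mathcal F_{t}]$. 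Wait — read in the right direction, this is exactly the upper bound $\mathbb E[R_{t}(\hat\lambda_{t},\tau)\mid\mathcal F_{t}]\ge Y(t)$... actually, since $K^{+}$ appears with a plus sign on the right of (5.1.1), the correct conclusion is $Y(t)\ge\mathbb E[R_{t}(\hat\lambda_{t},\tau)\mid\mathcal F_{t}]$, matching the statement. The symmetric argument with the roles of $K^{\pm}$, $L,U$ and $\hat\lambda_{t},\hat\tau_{t}$ interchanged yields $Y(t)\le\mathbb E[R_{t}(\lambda,\hat\tau_{t})\mid\mathcal F_{t}]$ for every $\lambda$. Sandwiching shows $\underline V(t)=\overline V(t)=Y(t)$.

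The main obstacle is handling the nonlinear generator; in the original Cvitanić–Karatzas setting the generator is linear and the value is a plain conditional expectation, whereas here one must carry along $\alpha,\eta$ and verify that the Girsanov density $N^{t}$ is a true martingale (which follows from boundedness of $\eta$ via a Novikov-type estimate). All remaining work—the case analysis for $Y(\hat\lambda_{t}\wedge\tau)$ versus $R_{t}(\hat\lambda_{t},\tau)$, and the identification of the Skorokhod conditions along the stopping-time intervals—is routine once Theorem \ref{th1} is available.
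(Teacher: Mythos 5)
Your opening reduction is exactly what the paper does: the paper's entire proof consists of noting that, by Theorem \ref{th1}, \eqref{5.1.1} is a genuine two-barriers-reflected BSDE driven by the Brownian motion $\beta$ (same filtration as $W$), with continuous barriers satisfying (A3), and then invoking \cite[Theorem 4.1]{Karatzas}. Had you stopped there, you would have matched the paper. The trouble is in the three-step re-derivation of the cited theorem, where there is a genuine gap in the treatment of the nonlinear generator. Your Step 2 (linearise $G$, Girsanov away the $\hat Z$-term, discount away the $Y$-term) yields a representation of the form $Y(t)=\mathbb E_{\mathbb Q}\big[\Gamma(\theta)Y(\theta)+\int_t^{\theta}\Gamma(u)G(u,\widetilde X(u),0,0)\,du\mid\mathcal F_t\big]$ with $\theta=\hat\lambda_t\wedge\hat\tau_t$: an expectation under the \emph{new} measure, weighted by a discount factor and carrying a residual running term. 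There is no "usual identification through the martingale $N^{t}$" that turns this into the plain $\mathbb P$-conditional expectation $\mathbb E[R_t(\hat\lambda_t,\hat\tau_t)\mid\mathcal F_t]$ appearing in the definitions of $\underline V$ and $\overline V$; conditioning under $\mathbb Q$ introduces the density as a weight on the payoff, it does not disappear. In Cvitani\'c–Karatzas the generator is handled with \emph{no} change of measure at all: the payoff of the game includes the running term $\int_t^{\lambda\wedge\tau}G(s,\widetilde X(s),Y(s),\hat Z(s))\,ds$ evaluated along the solution, and one simply takes $\mathbb P$-conditional expectations of the stopped equation, the stochastic integral having zero conditional mean. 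So either the payoff must carry that running term (as in \cite[Theorem 4.1]{Karatzas}; in the paper's application in Section 5.2 the generator is identically zero, so the issue evaporates), or your Girsanov device must be abandoned — as written, Step 2 does not prove the claimed identity, and the same defect infects the inequalities in Step 3, which are also asserted under $\mathbb P$.

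A second, smaller problem is the direction of the case analysis in Step 3. The correct statement is $Y(\hat\lambda_t\wedge\tau)\ge R_t(\hat\lambda_t,\tau)$: on $\{\tau\le\hat\lambda_t,\ \tau<T\}$ one uses $Y(\tau)\ge L(\tau)$, on $\{\hat\lambda_t<\tau\}$ one has $\hat\lambda_t<T$ and $Y(\hat\lambda_t)=U(\hat\lambda_t)$, and at $T$ one has $Y(T)=g(\widetilde X(T))$. You assert the opposite inequality $Y(\hat\lambda_t\wedge\tau)\le R_t(\hat\lambda_t,\tau)$ and then flip the final conclusion "to match the statement," which leaves the argument internally inconsistent: from $Y(\theta)\le R_t$ together with a nonnegative $K^{+}$-increment no comparison between $Y(t)$ and $\mathbb E[R_t(\hat\lambda_t,\tau)\mid\mathcal F_t]$ follows in either direction. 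With the inequality corrected (and $K^{-}$ constant on $[t,\hat\lambda_t)$ by the Skorokhod condition, as you rightly note), the desired bound $Y(t)\ge\mathbb E[R_t(\hat\lambda_t,\tau)\mid\mathcal F_t]$ does follow — but only in the zero-generator case or with the running term included in the payoff, per the previous paragraph. Your Step 1 (the identification $Y(\hat\lambda_t\wedge\hat\tau_t)=R_t(\hat\lambda_t,\hat\tau_t)$ and the constancy of $K^{\pm}$ before the saddle time) is correct.
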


\hspace*{1em}
\subsection{American Game Option}
\hspace*{1em}

 Assume that there are two participants, a broker and a trader. The American game option means that the trader needs to pay a premium and is entitled to ask for a contingent claim when he (or she) decides to exercise while the broker can cancel the contract at any time at the expense of a higher amount. When the trader first exercises at time $\sigma$, he (or she) gets $L(\sigma,\widetilde{X}(\sigma))$. If the broker choose to first cancel the contract at $\lambda$, he (or she) needs to pay $U(\lambda,\widetilde{X}(\lambda))$, which is a larger amount than $L(\lambda,\widetilde{X}(\lambda))$. The difference between the amount of $U(\lambda,\widetilde{X}(\lambda))$ and $L(\lambda,\widetilde{X}(\lambda))$ is a penalty. If neither exercises the option by the maturity date $T$, then the broker pays $g(\widetilde{X}(T))$. In conclusion, if the broker decides to exercise at $\lambda\leq T$ and the trader exercises at $\sigma\leq T$, then the broker pays to the trader $R(\lambda,\sigma)$:
 \[
R(\lambda,\sigma):=\textbf{1}_{\{\sigma \wedge \lambda = T\}}g(\widetilde{X}(T))+\textbf{1}_{\{\sigma \leq \lambda\}}L(\sigma,\widetilde{X}(\sigma))+\textbf{1}_{\{\lambda < \sigma\}}U(\lambda,\widetilde{X}(\lambda)).
\]
 It can be seen that this is a typical applicant of Dynkin games and the relevant definitions are introduced as follows.

\begin{definition}
 A self-financing portfolio on $[t,T]$ combined with initial endowment $H$ (at initial time) is a process $\pi=(\pi_{1},\cdots,\pi_{n})$  so as to make the wealth process $Y^{\pi,H}$ meet
\begin{equation*}
Y^{\pi,H}(s)=H+\int_{t}^{s}\sum_{k=1}^{n}\pi_{k}(u)\frac{dP_{k}(u)}{P_{k}(u)}+\int_{t}^{s}\left(Y^{\pi,H}(u)-\sum_{k=1}^{n}\pi_{k}(u)\right)\frac{dP_{0}(u)}{P_{0}(u)},\;\forall s\in[t,T].
\end{equation*}
\end{definition}

\begin{definition}
A hedge strategy $(\pi,\lambda)$ against the option on $[t,T]$ with return
\[
R(\lambda,s):=\textbf{1}_{\{s \wedge \lambda = T\}}g(\widetilde{X}(T))+\textbf{1}_{\{s\leq \lambda\}}L(s,\widetilde{X}(s))+\textbf{1}_{\{\lambda < s \}}U(\lambda,\widetilde{X}(\lambda)),\quad s\in[t,T],
\]
is a pair of a self-financing portfolio $\pi$ with initial endowment $H$ at $t$ and an $\{\mathcal F_t^W\}$-stopping time $\lambda$
 such that
\[
Y^{\pi,H}(s\wedge \lambda)\geq R(\lambda,s).
\]

\end{definition}

\begin{definition}
 The fair value at time $t$ of the option is the infimum of initial endowment $H$ for which a hedge exists, i.e.,
\[
V(t):= \inf\Big\{H\geq0:there\;is\;a\;hedge\;strategy\;(\pi,\lambda)\Big\}.
\]
\end{definition}

In this section, owing to $(\ref{5})$, the wealth process evolves:
\begin{equation}
\left\{\begin{aligned}
dY^{\pi,H}(s)&=\left[Y^{\pi,H}(s)r_{0}+\sum_{k=1}^{n} \pi_{k}(s)\left(\sum_{i=1}^{n} \left(\delta_{i}-r_{0}\right)\mathbf{1}_{\left\{P_{k}^{t, \hat p}(s)=P_{(i)}^{t, \hat p}(s)\right\}}\right)\right] ds \\
&+\sum_{k=1}^{n} \pi_{k}(s)\left(\sum_{i=1}^{n}\sigma_{i}\mathbf{1}_{\left\{P_{k}^{t, \hat p}(s)=P_{(i)}^{t, \hat p}(s)\right\}} \right) d W_{k}(s), \\
Y^{\pi, H}(t)&=H.
\end{aligned}\right.
\end{equation}
Denote
\begin{equation}
\label{5.2.2}
\bar\pi_{i}(s):=\sum_{k=1}^{n}\pi_{k}(s)\mathbf1_{\left\{P_{k}^{t, \hat p}(s)=P_{(i)}^{t, \hat p}(s)\right\}}\sigma_{i},\;i=1,2,3,\cdots,n.
\end{equation}
Thus,
\begin{equation}
dY^{\pi,H}(s)=\left(Y^{\pi,H}(s)r_{0}+\sum_{k=1}^n\frac{\delta_{k}-r_{0}}{\sigma_{k}}\bar\pi_{k}(s)\right)ds+\sum_{k=1}^{n}\bar\pi_{k}(s)d\beta_{k}(s).
\end{equation}
It follows that the existence of self-financing portfolio $\pi$ is
equivalent to the existence of $\bar \pi$.

Before presenting the main result, we give a new two-barriers-reflected-BSDE. Inducing the following new measure
\[
d\mathbb P^{*}=\exp\left\{\int_{0}^{t}-\frac{1}{2}\sum_{j=1}^{n}\left(\frac{\delta_{j}-r_{0}}{\sigma_{j}}\right)^{2}du-\sum_{j=1}^{n}\int_{0}^{t}\left(\frac{\delta_{j}-r_{0}}{\sigma_{j}}\right)d\beta_{j}(u)\right\}d\mathbb P,
\]
then by the Girsanov Theorem,
\[\widetilde{\beta}_{j}(t)=\beta_{j}(t)+\int_{0}^{t}\left(\frac{\delta_{j}-r_{0}}{\sigma_{j}}\right)du,\;j=1,\cdots,n,\]
is an  $(\mathcal{F}_{t},\mathbb P^{*})$-Brownian motion. 
Introduce this following BSDE on space $(\Omega,\mathcal{F},\mathbb P^{*})$ for $t$ in $[0,T]$:
\begin{eqnarray}
\label{5.2}
\left\{\begin{array}{l}
\widetilde{Y}(t)=g(\widetilde{X}(T))e^{-r_{0}T}-\displaystyle \int_{t}^{T}\hat{\widetilde{Z}}^{t,x}(u)\cdot d\widetilde{\beta}(u) +\widetilde{K}(T)-\widetilde{K}(t),\vspace{1ex}\\
L(t)e^{-r_{0}t}\leq \widetilde{Y}(t)\leq U(t)e^{-r_{0}t},\vspace{1ex}\\
\displaystyle \int_{0}^{T}\left(U(u)e^{-r_{0}u}-\widetilde{Y}(u)\right)d\widetilde{K}^{-}(u)=\int_{0}^{T}\left(\widetilde{Y}(u)-L(u)e^{-r_{0}u}\right)d\widetilde{K}^{+}(u)=0,
\end{array}\right.
\end{eqnarray}
where $\widetilde{\beta}$ is the vector of $\{\widetilde{\beta}_{j}\}$. It then follows from  \textit{Theorem 1} that we can suppose $(\widetilde{Y},\widetilde{Z},\widetilde{K}^{+},\widetilde{K}^{-})$ is a unique solution of ($\ref{5.2}$). Taking expectation $\mathbb{E}^{*}$ and based on  \textit{Theorem 9}, we also have
\[
\mathop{esssup}\limits_{\tau \in \mathcal{M}_{t,T}} \mathop{essinf}\limits_{\lambda \in \mathcal{M}_{t,T}}\mathbb E^{*}\left[R_{t}^1(\lambda,\tau) \mid \mathcal{F}_{t}\right]=\widetilde{Y}(t)= \mathop{essinf}\limits_{\lambda \in \mathcal{M}_{t,T}}\mathop{esssup}\limits_{\tau \in \mathcal{M}_{t,T}}\mathbb  E^{*}\left[R_{t}^1(\lambda,\tau) \mid \mathcal{F}_{t}\right],
\]
where $R^{1}_{t}(\lambda,\tau)=g(\widetilde{X}(T))\textbf{1}_{\{\tau \wedge \lambda = T\}}e^{-r_{0}T}+\textbf{1}_{\{\tau < T, \tau \leq \lambda \}}L(\tau)e^{-r_{0}\tau}+\textbf{1}_{\{\lambda < \tau\}}U(\lambda)e^{-r_{0}\lambda}$.

Define the stopping time again
\begin{equation*}
\begin{aligned}
&\hat\tau_{t}^{*}:=\min\Big\{T,\inf\Big\{a\in[t,T)\mid \widetilde{Y}(a)=e^{-r_{0}t}L(a)\Big\}\Big\}.\\
&\hat\lambda_{t}^{*}:=\min\Big\{T,\inf\Big\{a\in[t,T)\mid \widetilde{Y}(a)=e^{-r_{0}t}U(a)\Big\}\Big\}.
\end{aligned}
\end{equation*}

It is ready for us to give this following Theorem.

\begin{theorem} The fair value $V(t)$ of American game option with $R(\lambda,\sigma)$ is $e^{r_{0}t}\widetilde{Y}(t)$ for $t \leq T$.
\end{theorem}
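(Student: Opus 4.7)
The plan is to establish the two-sided inequality $V(t)=e^{r_{0}t}\widetilde{Y}(t)$ by producing a concrete hedging strategy that attains the value $e^{r_{0}t}\widetilde{Y}(t)$ (giving $V(t)\leq e^{r_{0}t}\widetilde{Y}(t)$) and by showing that no admissible hedge can start from a smaller initial endowment (giving $V(t)\geq e^{r_{0}t}\widetilde{Y}(t)$). The key ingredient is the BSDE \eqref{5.2} together with the Dynkin-game saddle-point $(\hat{\lambda}_{t}^{*},\hat{\tau}_{t}^{*})$ supplied by \textit{Theorem 9}.

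For the upper bound, set $Y(s):=e^{r_{0}s}\widetilde{Y}(s)$ and define the candidate portfolio by $\bar{\pi}_{j}(s):=e^{r_{0}s}\hat{\widetilde{Z}}_{j}(s)$, with $\pi$ recovered from $\bar{\pi}$ via \eqref{5.2.2}. Applying It\^{o}'s formula to $Y$, substituting $d\widetilde{\beta}_{j}=d\beta_{j}+\tfrac{\delta_{j}-r_{0}}{\sigma_{j}}\,ds$ from the Girsanov change of measure, and comparing with the wealth dynamics shows that $Y^{\pi,H}-Y$ solves a linear ODE driven by $e^{r_{0}s}d\widetilde{K}(s)$, so that with initial endowment $H=e^{r_{0}t}\widetilde{Y}(t)$ one obtains
\[
Y^{\pi,H}(s)=e^{r_{0}s}\widetilde{Y}(s)+e^{r_{0}s}\bigl(\widetilde{K}(s)-\widetilde{K}(t)\bigr).
\]
Taking $\lambda=\hat{\lambda}_{t}^{*}$, the Skorokhod condition $\int_{0}^{T}(U(u)e^{-r_{0}u}-\widetilde{Y}(u))\,d\widetilde{K}^{-}(u)=0$ together with the strict inequality $\widetilde{Y}(s)<U(s)e^{-r_{0}s}$ for $s<\hat{\lambda}_{t}^{*}$ forces $\widetilde{K}^{-}$ to stay constant on $[t,\hat{\lambda}_{t}^{*})$, hence $\widetilde{K}(s)-\widetilde{K}(t)=\widetilde{K}^{+}(s)-\widetilde{K}^{+}(t)\geq 0$ there. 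A short case analysis on $\{s\wedge\hat{\lambda}_{t}^{*}=T\}$, $\{s\leq\hat{\lambda}_{t}^{*}\}$ and $\{\hat{\lambda}_{t}^{*}<s\}$, using $Y(T)=g(\widetilde{X}(T))$, $Y(\hat{\lambda}_{t}^{*})=U(\hat{\lambda}_{t}^{*})$ on $\{\hat{\lambda}_{t}^{*}<T\}$, and $Y(s)\geq L(s)$ on $[t,\hat{\lambda}_{t}^{*}]$, yields $Y^{\pi,H}(s\wedge\hat{\lambda}_{t}^{*})\geq R(\hat{\lambda}_{t}^{*},s)$ for every $s\in[t,T]$. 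This is a hedge, so $V(t)\leq e^{r_{0}t}\widetilde{Y}(t)$.

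For the lower bound, take any hedge $(\pi,\lambda)$ with initial endowment $H$ at time $t$. Under $\mathbb{P}^{*}$ the discounted wealth $e^{-r_{0}s}Y^{\pi,H}(s)$ is a local $\mathbb{P}^{*}$-martingale (the drift cancels by construction of the risk-neutral measure), and the integrability in $\bar{\pi}$ makes it a true martingale on $[t,T]$. Stopping at the bounded time $\lambda\wedge\hat{\tau}_{t}^{*}$ and using the hedging inequality $Y^{\pi,H}(\lambda\wedge\hat{\tau}_{t}^{*})\geq R(\lambda,\hat{\tau}_{t}^{*})$, together with the elementary identity $R_{t}^{1}(\lambda,\tau)=e^{-r_{0}(\lambda\wedge\tau)}R(\lambda,\tau)$ (verified case by case on the three events defining $R$), gives
\[
e^{-r_{0}t}H=\mathbb{E}^{*}\!\left[e^{-r_{0}(\lambda\wedge\hat{\tau}_{t}^{*})}Y^{\pi,H}(\lambda\wedge\hat{\tau}_{t}^{*})\,\big|\,\mathcal{F}_{t}\right]\geq\mathbb{E}^{*}\!\left[R_{t}^{1}(\lambda,\hat{\tau}_{t}^{*})\,\big|\,\mathcal{F}_{t}\right].
\]
Invoking the saddle-point inequality $\mathbb{E}^{*}[R_{t}^{1}(\lambda,\hat{\tau}_{t}^{*})\,|\,\mathcal{F}_{t}]\geq\widetilde{Y}(t)$ from \textit{Theorem 9} applied under $\mathbb{P}^{*}$ yields $H\geq e^{r_{0}t}\widetilde{Y}(t)$. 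Taking the infimum over admissible hedges closes the proof.

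I expect the main obstacle to be the integrability check needed to promote the local-martingale property of the discounted wealth to a genuine martingale under $\mathbb{P}^{*}$, in particular controlling the Dol\'eans-Dade exponential defining $d\mathbb{P}^{*}/d\mathbb{P}$ together with the $M^{2}$-bound on $\hat{\widetilde{Z}}$. The remaining work, including the case analysis of the hedging inequality and the identification $R_{t}^{1}=e^{-r_{0}(\lambda\wedge\tau)}R$, is routine bookkeeping.
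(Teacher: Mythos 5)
Your proposal is correct and follows essentially the same route as the paper: the upper bound replicates via the portfolio $\bar{\pi}_j(s)=e^{r_{0}s}\hat{\widetilde{Z}}_j(s)$ built from the BSDE \eqref{5.2} and the flatness of $\widetilde{K}^{-}$ before $\hat{\lambda}_{t}^{*}$, and the lower bound discounts an arbitrary hedge under $\mathbb{P}^{*}$ and invokes the Dynkin-game characterization of $\widetilde{Y}$ from \textit{Theorem 9}. The only differences are cosmetic bookkeeping (writing the wealth through a linear ODE rather than the paper's process $\mathcal{M}(s)$, and using the saddle stopping time $\hat{\tau}_{t}^{*}$ instead of the essinf--esssup chain), and the integrability caveat you flag is present, and equally implicit, in the paper's own argument.
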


\begin{proof} Fix time $t\leq T$ and suppose $(\pi,\lambda)$ is a hedge strategy. Hence, $\pi=(\pi_{1},\cdots,\pi_{n})$ is a self-financing portfolio and  $Y^{\pi,H}(s\wedge \lambda)$ is no less than $R(\lambda,s)$ for $t\leq s\leq T$. Thus using It\^{o}'s formula for $e^{-r_{0}t}Y^{\pi,H}$ from $t$ to $s\wedge \lambda$,
\begin{equation}
\begin{aligned}
e^{-r_{0}(s\wedge\lambda)}Y^{\pi,H}(s\wedge \lambda)&=e^{-r_{0}t}H+\int_{t}^{s\wedge \lambda}e^{-r_{0}u}\sum_{j=1}^{n}\bar\pi_{j}(u)d\widetilde{\beta}_{j}(u)\geq e^{-r_{0}(s\wedge\lambda)}R(\lambda,s) ,\;s\in[t,T].
\end{aligned}
\end{equation}
For any stopping time $\tau\geq t$,
\begin{equation*}
e^{-r_{0}t}H \geq \mathbb{E}^{*}\left[R(\lambda,\tau)e^{-r_{0}(\tau \wedge\lambda)}\mid \mathcal{F}_{t} \right]
\end{equation*}
Furthermore,
\begin{equation}\label{inequality-1}
\begin{aligned}
e^{-r_{0}t}H &\geq \mathop{esssup}\limits_{\tau \in \mathcal{M}_{t,T}}\mathbb{E}^{*}\left[R(\lambda,\tau)e^{-r_{0}(\tau\wedge\lambda)}\mid \mathcal{F}_{t}\right]\\
&\geq \mathop{essinf}\limits_{\lambda \in \mathcal{M}_{t,T}} \mathop{esssup}\limits_{\tau \in \mathcal{M}_{t,T}}\mathbb{E}^{*}\left[R(\lambda,\tau)e^{-r_{0}(\tau\wedge\lambda)}\mid \mathcal{F}_{t}\right]\\
&=\mathop{essinf}\limits_{\lambda \in \mathcal{M}_{t,T}} \mathop{esssup}\limits_{\tau \in \mathcal{M}_{t,T}}\mathbb{E}^{*}\left[R_{t}^{1}(\lambda,\tau)\mid \mathcal{F}_{t}\right]\\
&=\widetilde{Y}.
\end{aligned}
\end{equation}
This means $V(t)\geq e^{r_{0}t}\widetilde{Y}(t)$. We turn to the opposite inequality. For $t \leq s \leq T$, it is easily to see that
\begin{equation*}
\begin{aligned}
\widetilde{Y}(t)+\int_{t}^{\hat{\lambda}_{t}^{*}\wedge s}\widetilde{Z}(u)\cdot d \widetilde{\beta}(u)
&\geq \textbf{1}_{\{s\leq \hat{\lambda}_{t}^{*}\}}\widetilde{Y}(s)+\textbf{1}_{\{\hat{\lambda}_{t}^{*}<s\}}\widetilde{Y}(\hat{\lambda}_{t}^{*})
+\textbf{1}_{\{s=\hat{\lambda}_{t}^{*}=T\}}g(\widetilde{X}(T))e^{-r_{0}T}\\
&\geq \textbf{1}_{\{s\leq \hat{\lambda}_{t}^{*}\}}L(s)e^{-r_{0}s}+\textbf{1}_{\{\hat{\lambda}_{t}^{*}<s\}}U(\hat{\lambda}_{t}^{*})e^{-r_{0}\hat{\lambda}_{t}^{*}}+\textbf{1}_{\{\hat{\lambda}_{t}^{*}=s=T\}}g(\widetilde{X}(T))e^{-r_{0}T}\\
&=R(\hat{\lambda}_{t}^{*},s)e^{-r_{0}(\hat{\lambda}_{t}^{*}\wedge s)}.
\end{aligned}
\end{equation*}
That is, for s varying from $t$ to $T$,
\[
\widetilde{Y}(t)+\int_{t}^{\hat{\lambda}_{t}^{*}\wedge s}\widetilde{Z}(u)\cdot d \widetilde{\beta}(u) \geq R(\hat{\lambda}_{t}^{*},s)e^{-r_{0}(\hat{\lambda}_{t}^{*}\wedge s)}.
\]
Define $\mathcal{M}(s):=\widetilde{Y}(t)e^{r_{0}s}+e^{r_{0}s}\int_{t}^{s}\textbf{1}_{\{u\leq \lambda_{t}^{*}\}}\widetilde{Z}(u)\cdot d\widetilde{\beta}(u)$ and by It\^{o}'s formula
\[
\mathcal{M}(s)=e^{r_{0}t}\widetilde{Y}(t)+\displaystyle\int_{t}^{s}e^{r_{0}u}\textbf{1}_{\{u\leq \lambda_{t}^{*}\}}\widetilde{Z}(u) \cdot d\widetilde{\beta}(u)+\int_{t}^{s} r_{0}\mathcal{M}(u)du.
\]
Setting $\bar\pi_{j}(u):=e^{r_{0}u}\widetilde{Z}_{j}(u)\textbf{1}_{\{u\leq \lambda_{t}^{*}\}}$
 and owing to the uniqueness relation $(\ref{5.2.2})$ between $\pi$ and $\bar\pi$, we can find a self-financing portfolio $\pi=({\pi_{1},\cdots,\pi_{n}})$  which has initial value $e^{r_{0}t}\widetilde{Y}(t)$. Also, $(\pi,\lambda_{t}^{*})$ is a hedge against the American game option since $\mathcal{M}(s\wedge \lambda_{t}^{*})\geq R(\hat{\lambda}_{t}^{*},s)$ for $s$ varying from $t$ to $T$. As a consequence,
\begin{equation}\label{inequality-2}
V(t) \leq e^{r_{0}t}\widetilde{Y}(t).
\end{equation}
Combining \eqref{inequality-1} and \eqref{inequality-2}, the fair value of the American game option is $e^{r_{0}t}\widetilde{Y}(t)$.
\end{proof}

\section*{Acknowledgement}
Xinwei Feng's work is supported by National Natural Science
Foundation of China (No. 12001317), Shandong Provincial
Natural Science Foundation (No. ZR2020QA019) and QILU Young Scholars Program of Shandong University.




\section*{Reference}

\end{document}